\newif\ifpictures
\numberwithin{equation}{section}
\newtheorem{thm}{Theorem}
\newtheorem{prop}[thm]{Proposition}
\newtheorem{lemma}[thm]{Lemma}
\newtheorem{cor}[thm]{Corollary}
\theoremstyle{definition}
\newtheorem{example}[thm]{Example}
\newtheorem{remark1}[thm]{Remark}
\newtheorem{question1}[thm]{Question}
\numberwithin{thm}{section}
\newcounter{FNC}[page]
\def\newfootnote#1{{\addtocounter{FNC}{2}$^\fnsymbol{FNC}$%
     \let\thefootnote\relax\footnotetext{$^\fnsymbol{FNC}$#1}}}
\newcommand{\C}{\mathbb{C}}
\newcommand{\N}{\mathbb{N}}
\newcommand{\R}{\mathbb{R}}
\renewcommand{\P}{\mathbb{P}}
\newcommand{\Sph}{\mathbb{S}}
\newcommand{\compl}{\mathsf{c}}
\DeclareMathOperator{\cone}{cone}
\DeclareMathOperator{\inter}{int}
\DeclareMathOperator{\diag}{diag}
\DeclareMathOperator{\rec}{rec}
\DeclareMathOperator{\init}{in}
\DeclareMathOperator{\pos}{pos}
\let\Im\relax
\DeclareMathOperator{\Im}{Im}
\title[]{Hyperbolicity cones and imaginary projections}
\author{Thorsten J\"orgens and Thorsten Theobald}
\address{Goethe-Universit\"at, FB 12 -- Institut f\"ur Mathematik,
Postfach 11 19 32, D--60054 Frankfurt am Main, Germany}
\email{\{joergens,theobald\}@math.uni-frankfurt.de}
\date{\today}
\subjclass[2010]{14P10, 12D10, 52A37}
\keywords{Imaginary projection, hyperbolicity cone, hyperbolic polynomial,
  component of the complement, convexity in algebraic geometry}
\begin{document}

\begin{abstract}
	Recently, the authors and de Wolff introduced the imaginary projection of a polynomial $f\in\C[\mathbf{z}]$ as the projection of the variety of $f$ onto its imaginary part, $\mathcal{I}(f) \ = \ \{\Im(\mathbf{z}) \, : \, \mathbf{z} \in \mathcal{V}(f) \}$. 
	Since a polynomial $f$ is stable if and only if $\mathcal{I}(f) \cap \R_{>0}^n \ = \ \emptyset$, the notion offers a 	novel geometric view underlying stability questions of polynomials.
		In this article, we study the relation between the imaginary projections and hyperbolicity cones, where the latter ones are only defined for homogeneous polynomials. Building upon this,  
for homogeneous polynomials we provide a tight upper bound for the number of components in the complement $\mathcal{I}(f)^{\compl}$ and thus for the number of hyperbolicity
cones of~$f$.
And we show that for $n \ge 2$, a polynomial $f$ in $n$ variables can have an arbitrarily high number of strictly convex and bounded components in $\mathcal{I}(f)^{\compl}$. 
\end{abstract}

\maketitle 

\section{Introduction}

A homogeneous polynomial $f\in\R[\mathbf{z}] =\R[z_1,\ldots,z_n]$ is called \emph{hyperbolic} in direction $\mathbf{e}\in\R^n$ if $f(\mathbf{e})\neq0$ and for every $\mathbf{x} \in \R^n$ the real function $t\mapsto f(\mathbf{x}+t\mathbf{e})$ has only real roots. 

We denote by $C(\mathbf{e})=\{\mathbf{x}\in\R^n:f(\mathbf{x}+t\mathbf{e})=0 \Rightarrow t<0\}$ the \emph{hyperbolicity cone} of $f$ with respect to $\mathbf{e}$. 
By G\r{a}rding's results \cite{garding-59}, 
$C(\mathbf{e})$ is convex, $f$ is hyperbolic with respect 
to every point $\mathbf{e}'$ in its hyperbolicity cone and $C(\mathbf{e})=C(\mathbf{e}')$ (see \cite{garding-59}). Note, that $0\notin C(\mathbf{e})$ and 
$-C(\mathbf{e})=C(-\mathbf{e})$ is a hyperbolicity cone of $f$ as well. Furthermore, hyperbolicity cones are open. Recent interest
in the hyperbolicity cones was supported by their application in hyperbolic 
programming (see \cite{gueler-97,nesterov-tuncel-2016,renegar-2006}) as well as by the open conjecture that every
hyperbolicity cone is spectrahedral (``Generalized Lax conjecture'', see 
\cite{vinnikov-2012} for an overview as well as \cite{amini-braenden-2015,HV2007,kpv-2015,LPR2005,sanyal-netzer-2015}).

In \cite{TTT}, the authors and de Wolff introduced the \emph{imaginary projection} of a polynomial
$f \in \C[\mathbf{z}]$ as the projection of the variety $\mathcal{V}(f)$ of $f$ onto its imaginary
part, $\mathcal{I}(f) \ = \ \{\Im(\mathbf{z}) \, : \, \mathbf{z} \in \mathcal{V}(f) \}$.
A polynomial $f\in\R[\mathbf{z}]$ is (real) \emph{stable}, i.e., its imaginary projection does not intersect the positive orthant, if and only if $f$ is hyperbolic with respect to every point in the positive orthant, see \cite{garding-59, wagner-2011}.
The complement of the closure of $\mathcal{I}(f)$ consists of finitely many convex components, thus
offering strong connections to the theory of amoebas (see \cite{TTT}).

The main goal of this paper is to study the number of complement components of the imaginary projection of a polynomial $f$.
In the homogeneous case, it turns out that this question is equivalent to characterizing the 
number of hyperbolicity cones of $f$:

\begin{thm}\label{thm:connection-complement-hyperbolicity-cone}
	Let $f\in\R[\mathbf{z}]$ be homogeneous. Then the hyperbolicity cones of $f$ coincide with the components of $\mathcal{I}(f)^{\compl}$.
\end{thm}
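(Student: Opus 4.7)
The plan is to reduce the statement to the following bridge fact: for a homogeneous $f$, a point $\mathbf{e}\in\R^n$ lies in $\mathcal{I}(f)^{\compl}$ if and only if $f$ is hyperbolic with respect to $\mathbf{e}$. First I would prove this bridge. If $\mathbf{e}\in\mathcal{I}(f)$, then $f(\mathbf{x}+i\mathbf{e})=0$ for some $\mathbf{x}\in\R^n$, so $t\mapsto f(\mathbf{x}+t\mathbf{e})$ has the nonreal root $t=i$ and hyperbolicity fails. Conversely, suppose $f$ is not hyperbolic at $\mathbf{e}$: if $f(\mathbf{e})=0$, then by homogeneity $f(i\mathbf{e})=i^{\deg f}f(\mathbf{e})=0$, giving $\mathbf{e}=\Im(i\mathbf{e})\in\mathcal{I}(f)$; otherwise some univariate $f(\mathbf{x}+t\mathbf{e})$ admits a root $a+bi$ with $b\neq 0$, so $b\mathbf{e}\in\mathcal{I}(f)$, and since homogeneity makes $\mathcal{I}(f)$ invariant under nonzero real scaling (if $\mathbf{z}\in\mathcal{V}(f)$, then $\lambda\mathbf{z}\in\mathcal{V}(f)$ for every $\lambda\in\R\setminus\{0\}$), we conclude $\mathbf{e}\in\mathcal{I}(f)$.

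With the bridge in hand, the inclusion ``each hyperbolicity cone sits inside some component of $\mathcal{I}(f)^{\compl}$'' is immediate, since $C(\mathbf{e})$ is open, convex (hence connected), and contained in $\mathcal{I}(f)^{\compl}$ because every direction in $C(\mathbf{e})$ is hyperbolic by G\r{a}rding. An auxiliary consequence is that $\mathcal{I}(f)^{\compl}$ is itself open, since each of its points lies in an open hyperbolicity cone that it contains; in particular all of its components are open.

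For the converse inclusion I would fix a component $K$ of $\mathcal{I}(f)^{\compl}$ and show $K$ equals a single hyperbolicity cone. Every $\mathbf{v}\in K$ is a direction of hyperbolicity by the bridge, and $C(\mathbf{v})\subseteq K$ because $C(\mathbf{v})$ is an open connected subset of $\mathcal{I}(f)^{\compl}$ meeting $K$. Moreover, the G\r{a}rding identity $C(\mathbf{e})=C(\mathbf{e}')$ whenever $\mathbf{e}'\in C(\mathbf{e})$ forces any two hyperbolicity cones of $f$ to be either identical or disjoint. Hence $K$ decomposes as a disjoint union of open hyperbolicity cones, and connectedness of $K$ permits only one member of this family, yielding $K=C(\mathbf{v})$.

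I expect the main obstacle to be the bridge itself, in particular the bookkeeping that turns a nonreal root $a+bi$ of $f(\mathbf{x}+t\mathbf{e})$ into $\mathbf{e}$ (rather than $b\mathbf{e}$) in $\mathcal{I}(f)$, together with the degenerate case $f(\mathbf{e})=0$; both steps rely essentially on homogeneity, which matches the fact that the theorem would fail in the non-homogeneous setting. Once the bridge is established, the remainder is a routine topological covering argument using openness of hyperbolicity cones and G\r{a}rding's disjointness.
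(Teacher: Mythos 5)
Your proposal is correct and follows essentially the same route as the paper: the core is the same pointwise equivalence (a non-real root produces an imaginary part proportional to $\mathbf{e}$, and homogeneity is used both to scale and to handle the degenerate case $f(\mathbf{e})=0$), matching the paper's two properties. You make the final topological step (that each component of $\mathcal{I}(f)^{\compl}$ is exactly one hyperbolicity cone, via openness, G\r{a}rding's identical-or-disjoint dichotomy, and connectedness) slightly more explicit than the paper, which leaves this as a short implicit deduction.
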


Hence,
imaginary projections offer a geometric view on the collection of all
hyperbolicity cones of a given polynomial. Building upon this,
we can provide the following sharp upper bound for homogeneous polynomials.

\begin{thm}\label{th:thm1}
	Let $f \in \R[\mathbf{z}]$ be homogeneous of degree $d$. 
	Then the
   number of hyperbolicity cones of $f$ and thus the number of components in the complement of $\mathcal{I}(f)$ is at most 
\[
  \begin{cases}
     2^{d} & \text{ for } d\leq n \, , \\
     2 \sum_{k=0}^{n-1}\binom{d-1}{k} & \text{ for } d>n \, .
  \end{cases}
\]
 The maximum is attained if and only if $f$ is a product of independent linear polynomials in the sense that any $n$ of them are linearly independent.
\end{thm}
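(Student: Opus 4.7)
By Theorem~\ref{thm:connection-complement-hyperbolicity-cone}, I will equivalently bound the number of connected components of $\mathcal{I}(f)^{\compl}$ and characterise when equality holds. The argument naturally splits into realising the bound by an explicit construction and proving it is an upper bound.

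For the realisation, the key input is $\mathcal{I}(\ell)=\ker\ell$ for any real linear form $\ell$, which follows from $\ell(\mathbf{x}+i\mathbf{y})=\ell(\mathbf{x})+i\ell(\mathbf{y})$. Combined with the identity $\mathcal{I}(gh)=\mathcal{I}(g)\cup\mathcal{I}(h)$ (immediate from $\mathcal{V}(gh)=\mathcal{V}(g)\cup\mathcal{V}(h)$), a product $f=\ell_{1}\cdots\ell_{d}$ of real linear forms has $\mathcal{I}(f)=\bigcup_{i=1}^{d}\ker\ell_{i}$, a central hyperplane arrangement in $\R^{n}$. Under the stated independence hypothesis this is a generic central arrangement, and the classical Zaslavsky/Buck chamber count gives exactly $2^{d}$ chambers when $d\le n$ (any $d$ linearly independent hyperplanes produce $2^{d}$ sign sectors after a change of coordinates) and $2\sum_{k=0}^{n-1}\binom{d-1}{k}$ chambers when $d>n$, so the bound is attained.

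For the upper bound on a general homogeneous hyperbolic $f$ of degree $d$, my plan is a deformation / semicontinuity argument through the variety of hyperbolic polynomials. I would join $f$ to a generic product $g=\prod\ell_{i}$ of $d$ real linear forms by a continuous path $(f_{t})_{t\in[0,1]}$ of homogeneous hyperbolic polynomials (using the connectedness of the hyperbolicity cone in polynomial space) and track the number of components of $\mathcal{I}(f_{t})^{\compl}$. Each hyperbolicity cone is an open convex cone with boundary contained in $V_{\R}(f_{t})$, so under small perturbations within the hyperbolic variety the cones deform continuously; the central claim is that cones can only merge as $t\to 0$, never split off or appear anew. This gives $b_{0}(\mathcal{I}(f)^{\compl})\le b_{0}(\mathcal{I}(g)^{\compl})$, the claimed bound.

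For the equality case, if $f$ attains the bound then the semicontinuity above is an equality, so the hyperbolicity cones of $f$ are in bijection with the chambers of a generic central arrangement of $d$ hyperplanes. Convexity together with $\partial C\subset V_{\R}(f)$ then forces $V_{\R}(f)$ to consist of $d$ real hyperplanes in general position, and a degree count identifies $f$ with their product. The main obstacle in the plan is the semicontinuity step: the variety of degree-$d$ hyperbolic polynomials in $n\ge 3$ variables contains irreducible polynomials that are not limits of products of $d$ linear forms, so the path $f_{t}$ will in general pass through non-product hyperbolic polynomials, and the semicontinuity of $b_{0}$ along such a path is subtle. A rigorous version must work directly with the closed semi-algebraic family $\{(t,\mathbf{y}):\mathbf{y}\in\mathcal{I}(f_{t})\}$ in $[0,1]\times\R^{n}$ and exploit convexity of the complement components to rule out that new components appear at $t=0$.
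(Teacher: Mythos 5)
Your realization half is correct and essentially matches the paper: $\mathcal{I}(\ell)=\ker\ell$, $\mathcal{I}(gh)=\mathcal{I}(g)\cup\mathcal{I}(h)$, and Zaslavsky's chamber count for a generic central arrangement of $d$ hyperplanes in $\R^n$ give exactly the stated numbers (this is the paper's Lemma~\ref{th:number-hyperbolicity-cones-linear-polynomials}).

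The upper bound, however, is where your plan diverges from the paper and where it has a genuine gap. The paper does not deform $f$; it factors $f=p_1\cdots p_k$ into irreducibles, invokes Kummer's theorem (Proposition~\ref{th:thm2}, arXiv:1704.04388) that an irreducible homogeneous hyperbolic polynomial has exactly one pair of hyperbolicity cones, and then proves (Lemma~\ref{lemma:hyperbolicity-cones-product-smooth-linear}) that multiplying by an irreducible factor at most doubles the cone count. Crucially, it then \emph{replaces} each irreducible factor $p_i$ by a linear form $q_i$ whose zero hyperplane separates the two cones of $p_i$; this replacement is injective on hyperbolicity cones and strictly decreases degree whenever $p_i$ is nonlinear, which simultaneously yields the bound and the equality characterization. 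Your semicontinuity route avoids Kummer's theorem, which would be a genuine advantage if it worked — but the ``cones only merge, never split'' claim is not established, and even stated as lower semicontinuity of $b_0$ at $t=0$ it does not give what you need. Lower semicontinuity at the endpoint controls $b_0(f_0)$ against $b_0(f_t)$ for $t$ near $0$; it says nothing about $b_0(f_1)=b_0(g)$. To conclude $b_0(f)\le b_0(g)$ you would need monotonicity of $b_0$ along the entire path, and there is no reason to expect a Nuij-type path to behave monotonically (it may pass through degenerate polynomials where $b_0$ dips and rises again). So the inequality $b_0(\mathcal{I}(f)^{\compl})\le b_0(\mathcal{I}(g)^{\compl})$ does not follow from what you have written.

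Your equality analysis inherits the same problem: ``semicontinuity is an equality'' is not a meaningful condition until the semicontinuity statement itself is pinned down and proved, and the claim that equality forces $\mathcal{V}_\R(f)$ to be a union of $d$ hyperplanes in general position is asserted rather than derived. In the paper this falls out cleanly from the replacement argument: replacing a nonlinear irreducible factor by a line strictly lowers the degree while not decreasing the cone count, so a degree-$d$ maximizer cannot have a nonlinear factor. If you want to salvage the deformation idea, the honest fix is to prove upper and lower semicontinuity simultaneously and to control the whole path, not just its endpoints — or, more realistically, to use the irreducible-factor decomposition and Kummer's bound as the paper does, since that is exactly the structural input that makes the problem tractable without delicate topology in parameter space.
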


If a part of the boundary of a
complement component comes from a linear factor, then the complement
component is not strictly convex. 
It seems to be open whether for given dimension $n$, the number of 
strictly convex cones in the complement of $\mathcal{I}(f)$ can become
arbitrarily large for a homogeneous polynomial $f$.
We show in Theorem~\ref{th:strict-convex-nonhomog}, that a non-homogeneous polynomial 
$f$ can have an arbitrarily large number of \emph{strictly convex}, bounded 
components in the complement. 

\begin{thm}\label{th:strict-convex-nonhomog}
Let $n \ge 2$. For any $K > 0$ there exists a polynomial $f \in \R[\mathbf{z}]$
such that $f$ has at least $K$ strictly convex, bounded 
components in the complement of $\mathcal{I}(f)$.
\end{thm}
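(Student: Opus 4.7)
The plan is to construct an explicit example. I first treat the case $n = 2$ and then indicate how to adapt the construction for $n \ge 3$.

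For $n = 2$, I consider
\[
 f(z_1, z_2)
 \;=\;
 (z_1^2 + z_2^2 + 1) \cdot \prod_{\ell = 1}^{L} \bigl( q_\ell(z_1, z_2)^2 - c_\ell^2 \bigr),
\]
where $q_1, \dots, q_L$ are distinct real indefinite quadratic forms in $(z_1, z_2)$ (for instance $z_1 z_2$, $z_1^2 - z_2^2$, and further rotations thereof), and the $c_\ell > 0$ are chosen small. A direct computation, analogous to the standard one showing $\overline{\mathcal{I}(z_1 z_2 - c)} = \{-c \le y_1 y_2 \le 0\}$, yields $\overline{\mathcal{I}(z_1^2 + z_2^2 + 1)} = \{\|\mathbf{y}\|^2 \ge 1\}$ and, for each $\ell$, $\overline{\mathcal{I}(q_\ell^2 - c_\ell^2)} = \{|q_\ell(\mathbf{y})| \le c_\ell\}$: the latter a band bounded by the two hyperbola branches $\{q_\ell(\mathbf{y}) = \pm c_\ell\}$. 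Since the imaginary projection of a product is the union of the individual imaginary projections,
\[
 \overline{\mathcal{I}(f)}
 \;=\;
 \{\|\mathbf{y}\|^2 \ge 1\} \;\cup\; \bigcup_{\ell = 1}^{L} \{|q_\ell(\mathbf{y})| \le c_\ell\}.
\]

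By the general result (cited as \cite{TTT} in the excerpt) that every component of the complement of $\overline{\mathcal{I}(f)}$ is convex, every bounded component of $\mathcal{I}(f)^{\compl}$ lies inside the open unit disk and has its boundary contained in the union of the unit circle and the hyperbolas $\{q_\ell = \pm c_\ell\}$. All of these bounding curves are strictly convex (each is a conic section containing no line segment), so every such bounded component is itself strictly convex. A standard combinatorial count for arrangements of smooth convex curves in a convex region shows that, for the $q_\ell$ in ``general position'' (no three branches concurrent, all pairs transverse inside the disk) and the $c_\ell$ sufficiently small, the number of bounded cells grows at least like $\Omega(L^2)$ as $L \to \infty$; choosing $L$ large enough therefore yields at least $K$ strictly convex bounded components.

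For $n \ge 3$ the same recipe works, provided each quadratic form $q_\ell$ is chosen so that its real level hypersurfaces $\{q_\ell(\mathbf{y}) = \pm c_\ell\}$ are strictly convex in $\R^n$, i.e.\ contain no real affine line. A convenient choice is to take each $q_\ell$ of Lorentzian signature $(1, n - 1)$ with a distinct ``time axis''; the level sets are then two-sheeted hyperboloids with strictly convex sheets, and the enclosing sphere $\|\mathbf{y}\| = 1$ is likewise strictly convex, so the boundary of each bounded complement component still consists only of strictly convex pieces. The main technical obstacle, and the place where the argument becomes most delicate, is arranging the time axes and radii so that sufficiently many of the multi-fold intersections of the corresponding hyperbolic caps inside the open unit ball are nonempty and pairwise separated, so as to produce $\ge K$ bounded cells; this is achievable by a genericity argument, for instance by spreading the time axes densely on the unit sphere in $\R^n$ and taking the $c_\ell$ small enough that a cap around each chosen direction is disjoint from the caps around the others.
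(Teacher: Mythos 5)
Your $n=2$ construction is essentially the one the paper gives: the paper's
$g=(-z_1^2+z_2^2-1)(z_1^2-z_2^2-1)$ is exactly $-(q^2-1)$ with $q=z_1^2-z_2^2$, multiplied against rotated copies of itself and a large enclosing circle factor. One quantitative overclaim: the number of bounded cells grows only linearly in $L$, not as $\Omega(L^2)$. Since each $q_\ell$ is a homogeneous quadratic form, $\{q_\ell=0\}$ is a pair of lines through the origin, and for small $c_\ell$ the bands $\{|q_\ell|\le c_\ell\}$ are thin thickenings of those lines; all $2L$ lines are concurrent at the origin, so they cut the disk into $\Theta(L)$ sectors, and the ``general position for an arrangement of curves'' count does not apply. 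Linear growth still proves the theorem, so this does not break the $n=2$ case.

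The $n\ge 3$ case has a genuine gap. First, the proviso that both $\{q_\ell=c_\ell\}$ and $\{q_\ell=-c_\ell\}$ be strictly convex cannot be satisfied by any real quadratic form once $n\ge 3$: if $q_\ell$ has Lorentzian signature $(1,n-1)$, then $\{q_\ell=-c_\ell\}$ (with $c_\ell>0$) is a one-sheeted hyperboloid and is ruled. Relatedly, the identity $\overline{\mathcal{I}(q^2-c^2)}=\{|q(\mathbf{y})|\le c\}$ holds for $n=2$ but fails for $n\ge 3$; for the standard Lorentzian $q(\mathbf{y})=y_1^2-\sum_{j\ge 2}y_j^2$ one instead gets $\overline{\mathcal{I}(q^2-c^2)}=\{q(\mathbf{y})\le c\}$, so that the ruled level set is swallowed by $\mathcal{I}$ and never bounds a complement component. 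Most seriously, the genericity step is backwards: the complement of $\mathcal{I}\bigl(\prod_\ell(q_\ell^2-c_\ell^2)\bigr)$ is the \emph{intersection} $\bigcap_\ell\{q_\ell>c_\ell\}$, not a union, so spreading the time axes densely over $\Sph^{n-1}$ and shrinking the $c_\ell$ so that the forward/backward caps around each axis are pairwise disjoint makes that intersection \emph{empty} — you would obtain no bounded cells, not $K$ of them. The paper avoids this by putting a scaling parameter inside the quadratic, $g=r^2z_1^2-\sum_{j\ge 2}z_j^2+1$, so that for $r$ large $\mathcal{I}(g)$ is a thin slab near $\{y_1=0\}$, and by rotating $g$ only in the $(y_1,y_2)$-plane so that the $m$ slabs all contain the $(n-2)$-dimensional axis $\{y_1=y_2=0\}$ and slice the ball into $2m$ strictly convex wedges. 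Your argument needs both the scaling and a coherent choice of axes (rotations confined to a fixed $2$-plane rather than axes spread over $\Sph^{n-1}$) before it closes.
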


The following question remains open:

\begin{question1}
	Given a non-homogeneous
	polynomial $f \in \C[\mathbf{z}]$ of total degree $d$ (or with given Newton polytope $P$), how many bounded or unbounded 
	components can the complement of $\mathcal{I}(f)$ at most have?
\end{question1}

The paper is structured as follows. Section~\ref{se:homogeneous} deals with the
connections of imaginary projections of homogeneous polynomials and hyperbolicity cones
and proves Theorem~\ref{thm:connection-complement-hyperbolicity-cone}.
Section~\ref{se:proofs} contains
the proof of Theorem~\ref{th:thm1}, 
and it characterizes the boundary of imaginary projections.
Section~\ref{se:nonhomogeneous} then deals with inhomogeneous polynomials and proves 
Theorem~\ref{th:strict-convex-nonhomog}.

\section{Imaginary projection of homogeneous polynomials and hyperbolicity cones\label{se:homogeneous}}

Throughout the paper, we use bold letters for vectors, 
e.g., $\mathbf{z}=(z_1,\ldots,z_n)\in\C^n$. If not stated otherwise, 
the dimension is $n$. Denote by $\mathcal{V}(f)$ the complex variety of 
a polynomial $f$ and by $\mathcal{V}_\R(f)$ the real variety of $f$.
Moreover, set $A^{\compl} = \R^n \setminus A$ for the complement of
a set $A \subseteq \R^n$.

For the notion of hyperbolic polynomials, one usually starts from real homogeneous polynomials,
while imaginary projections can be defined also for non-homogeneous and for complex
polynomials. For coherence, 
we also consider the notion of a hyperbolic polynomial for
complex homogeneous polynomials $f \in \C[\mathbf{z}]$. Note that if $f \in \C[\mathbf{z}]$ 
is hyperbolic with respect to $\mathbf{a} \in \R^n$, then $f(\mathbf{z})/f(\mathbf{a})$ has real coefficients and is hyperbolic with respect to $\mathbf{a}$ as well (see \cite{garding-59}).

For homogeneous polynomials, we now prove the connection between hyperbolicity cones 
and imaginary projections stated in Theorem~\ref{thm:connection-complement-hyperbolicity-cone}. It generalizes the relation between homogeneous real stable polynomials and hyperbolicity cones, which was already mentioned in the introduction.
Note that for a homogeneous polynomial $f$, the imaginary
projection $\mathcal{I}(f)$ can be regarded as a (non-convex) cone, i.e.,
for any $\mathbf{z} \in \mathcal{I}(f)$ and $\lambda \ge 0$ we have $
\lambda \mathbf{z} \in \mathcal{I}(f)$.
Thus, in particular, $0\in\mathcal{I}(f)$.

\begin{proof}[Proof of Theorem \ref{thm:connection-complement-hyperbolicity-cone}]
	 We show the following two properties:
	 \begin{enumerate}
	 	\item If $f$ is hyperbolic with respect to $\mathbf{e}\in\R^n$, then the hyperbolicity cone $C(\mathbf{e})$ satisfies $C(\mathbf{e}) \subseteq \mathcal{I}(f)^{\compl}$.
	 	\item If there is a convex cone $C$ with $C\subseteq\mathcal{I}(f)^{\compl}$, then $f$ is hyperbolic with respect to every point in $C$, i.e., $C$ is contained in that hyperbolicity cone of $f$.
	 \end{enumerate}
	Assume first, that $f$ is hyperbolic with respect to $\mathbf{e}\in\R^n$, and let $\mathbf{e}'\in C(\mathbf{e})$. Then $\mathbf{e}'$ cannot be the imaginary part of a root $\mathbf{z}=\mathbf{x}+i\mathbf{y}$, since otherwise $i$ would be a non-real zero of the univariate function $t\mapsto f(\mathbf{x}+t\mathbf{e}')$.
	
	Assume now that there is a convex cone $C$ with $C\subseteq\mathcal{I}(f)^{\compl}$. The homogeneity of $f$ implies $-C\subseteq\mathcal{I}(f)^{\compl}$. For $\mathbf{e}\in \pm C$, we have $f(\mathbf{x}+i\mathbf{e})\neq 0$ for all $\mathbf{x}\in\R^n$, which gives in particular 
	\[
	f(\mathbf{e})=(1+i)^{-\deg f}f((1+i)\mathbf{e})=(1+i)^{-\deg f}f(\mathbf{e}+i\mathbf{e})\neq0,
	\]
where $\deg(f)$ denotes the degree of the homogeneous polynomial $f$.
	Furthermore, if there were an $\mathbf{x}\in\R^n$ such that $t\mapsto f(\mathbf{x}+t\mathbf{e})$ has a non-real solution $a+ib$, $b\neq0$, then 
	\[
	f(\mathbf{x}+a\mathbf{e}+ib\mathbf{e})=0
	\]
	in contradiction to $b\mathbf{e}\in\pm C\subseteq\mathcal{I}(f)^{\compl}$.
\end{proof}

As an immediate consequence, we obtain the following description of the imaginary projection of a homogeneous polynomial.
\begin{cor}\label{cor:hom-complement-two-cones}
	If $f\in\C[\mathbf{z}]$ is homogeneous, then its imaginary projection is a closed cone
       (in general non-convex).
	The components $C_1, \ldots, C_t$ of $\mathcal{I}(f)^{\compl}$ 
  are hyperbolicity cones of $f$ and occur pairwise,
       with $C_{i_1} = -C_{i_2}$.
	In particular, the imaginary projection of a homogeneous polynomial has no bounded components in its complement.
\end{cor}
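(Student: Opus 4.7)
The plan is to derive this corollary directly from Theorem~\ref{thm:connection-complement-hyperbolicity-cone} together with the homogeneity of $f$, assembling three largely formal steps.

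First I would verify that $\mathcal{I}(f)$ is a closed cone. Closedness of the imaginary projection of an arbitrary polynomial is known from \cite{TTT}, so only the cone property needs attention. If $\mathbf{x}+i\mathbf{y}\in\mathcal{V}(f)$ and $\lambda\in\R$, homogeneity gives $f(\lambda\mathbf{x}+i\lambda\mathbf{y})=\lambda^{\deg f}f(\mathbf{x}+i\mathbf{y})=0$, so $\lambda\mathbf{y}\in\mathcal{I}(f)$. This yields both closure under nonnegative scaling (as already noted in the paragraph preceding the corollary) and, crucially for the pairing, the symmetry $\mathcal{I}(f)=-\mathcal{I}(f)$.

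Next I would identify the components of $\mathcal{I}(f)^{\compl}$ with hyperbolicity cones by appealing to Theorem~\ref{thm:connection-complement-hyperbolicity-cone}. A minor subtlety is that the theorem is stated for $f\in\R[\mathbf{z}]$ while the corollary also allows $f\in\C[\mathbf{z}]$; this I would handle via the observation recorded earlier in the section that hyperbolicity of $f\in\C[\mathbf{z}]$ with respect to a real direction $\mathbf{a}$ passes to $f(\mathbf{z})/f(\mathbf{a})\in\R[\mathbf{z}]$, whose variety coincides with $\mathcal{V}(f)$. Alternatively, the proof of Theorem~\ref{thm:connection-complement-hyperbolicity-cone} transfers verbatim over $\C$.

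For the pairing $C_{i_1}=-C_{i_2}$, the symmetry $\mathcal{I}(f)=-\mathcal{I}(f)$ makes negation a homeomorphism of $\mathcal{I}(f)^{\compl}$, and it therefore permutes the components. No component is fixed: as a hyperbolicity cone, each $C_i$ is an open convex set with $0\notin C_i$, so containing both $\mathbf{e}$ and $-\mathbf{e}$ would force $0\in C_i$ by convexity. Hence the components group into pairs $\{C,-C\}$. Unboundedness is then immediate, since any hyperbolicity cone contains some $\mathbf{e}\neq 0$ together with the entire ray $\{\lambda\mathbf{e}:\lambda>0\}$. I expect the only point requiring any care to be the real-to-complex transfer of Theorem~\ref{thm:connection-complement-hyperbolicity-cone}; the remaining arguments are direct consequences of homogeneity and convex geometry.
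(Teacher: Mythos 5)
Your overall strategy mirrors the paper's (invoke Theorem~\ref{thm:connection-complement-hyperbolicity-cone}, then exploit homogeneity), and your treatment of the cone property, the symmetry $\mathcal{I}(f)=-\mathcal{I}(f)$, the fixed-point-free pairing of components under negation, and the unboundedness via rays is correct and in fact spelled out more carefully than in the paper itself (you also address the real-vs.-complex transfer of Theorem~\ref{thm:connection-complement-hyperbolicity-cone}, which the paper tacitly assumes).

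However, your justification of closedness has a genuine gap. You assert that ``closedness of the imaginary projection of an arbitrary polynomial is known from \cite{TTT},'' but that is not the case: in general $\mathcal{I}(f)$ need not be closed. This is precisely why \cite{TTT} and the present paper always refer to the complement of the \emph{closure} of $\mathcal{I}(f)$ when discussing arbitrary polynomials (for instance, $f(z_1,z_2)=z_1-z_2^2$ has $\mathcal{I}(f)=\big(\R\times(\R\setminus\{0\})\big)\cup\{0\}$, which is not closed). Closedness in the homogeneous case is therefore something that must be \emph{proved} here, and the paper does so differently: by Theorem~\ref{thm:connection-complement-hyperbolicity-cone} every component of $\mathcal{I}(f)^{\compl}$ is a hyperbolicity cone, hence open, and there are only finitely many of them; thus $\mathcal{I}(f)^{\compl}$ is a finite union of open sets and so open, making $\mathcal{I}(f)$ closed. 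Your argument goes through once this step is replaced, since nothing else in your proof relies on the faulty citation.
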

\begin{proof}
By Theorem~\ref{thm:connection-complement-hyperbolicity-cone}, the components $C_1, \ldots, C_t$ of $\mathcal{I}(f)^{\compl}$ are the hyperbolicity cones of 
$f$, which occur pairwise.
Since hyperbolicity cones are open and since $\mathcal{I}(f)$ has only finitely many of these
conic components in the complement, $\mathcal{I}(f)$ is closed.
And since $\mathcal{I}(f)$ is a cone, there are no bounded components in the complement.
\end{proof}

The following examples illustrate the connection stated in Theorem \ref{thm:connection-complement-hyperbolicity-cone} in well-known cases.

\begin{example}
	Let $f(\mathbf{z}) =z_1\cdots z_n$. Then $f$ is hyperbolic with respect to every point $\mathbf{e}\in(\R\setminus\{0\})^n$. Setting $z_j = x_j + i y_j$,
we obtain
\[
  \mathcal{I}(f) \ = \ \{\mathbf{y} \in \R^n \, : \, \prod_{j=1}^n (x_j + iy_j) = 0 \text{ for some } \mathbf{x} \in \R^n \} \ = \ \bigcup_{j=1}^n\{\mathbf{y}\in\R^n:y_j=0\}.
\]
\end{example}

\begin{example}\label{ex:SOC}
	Let $f(\mathbf{z})=z_1^2-\sum_{j=2}^nz_j^2$, $n>2$. 
It is well-known that $f$ is hyperbolic with respect to 
	any point $\mathbf{e}\in\R^n$ with $e_1^2-\sum_{j=2}^ne_j^2>0$ (e.g., \cite[Example 1]{garding-59}), and that the two hyperbolicity cones are
the open \emph{second-order cone} (or open \emph{Lorentz cone})
$\mathcal{L} = \{ \mathbf{x} \in \R^n \, : \, x_1^2-\sum_{j=2}^nx_j^2>0, \, x_1 > 0\}$ and
its negative $- \mathcal{L}$.
Likewise, the imaginary projection of $f$ is $	\mathcal{I}(f)= \{ \mathbf{y} \in \R^n \, : \, y_1^2 - \sum_{j=2}^n y_j^2 \le 0\} = \R^n \setminus (\mathcal{L} \cup - \mathcal{L})$,
which was computed as part of \cite[Theorem 5.4]{TTT}. This illustrates
Theorem~\ref{thm:connection-complement-hyperbolicity-cone}.

Furthermore, if a real homogeneous quadratic polynomial $f \in \R[\mathbf{z}]$ is hyperbolic, then
its hyperbolicity cone is the image of the second-order cone under a linear transformation; that property follows from
the classification of the imaginary projections of real homogeneous quadratic polynomials
in \cite{TTT}.
\end{example}

\begin{example}\label{ex:spectrahedron}
	Let $f(\mathbf{z})=\det(z_1A_1+\dots+z_nA_n)$, where $A_1,\ldots,A_n$ are Hermitian $d\times d$-matrices. It is well-known \cite[Prop.~2]{LPR2005}, that $f$ has the spectrahedral hyperbolicity cone
	\[
	C \ = \{\mathbf{x}\in\R^n:x_1A_1+\dots+x_nA_n\succ0\}.
	\]
	This implies that $\pm C$ are components of $\mathcal{I}(f)^{\compl}$. We can compute directly that $\mathcal{I}(f)^{\compl}$ has exactly these two components. Namely, given some $\mathbf{y}\in\R^n$ with $A(\mathbf{y}):=y_1A_1+\dots+y_nA_n\succ0$, we have
	\begin{equation}\label{eq:example-determinant}
	f(\mathbf{x}+i\mathbf{y})=\det\left(\sum_{j=1}^n x_jA_j+iA(\mathbf{y})\right)=\det (A(\mathbf{y}))\cdot\det\left(\sum_{j=1}^n x_j A(\mathbf{y})^{-1/2}A_jA(\mathbf{y})^{-1/2}+iI\right) \, ,
	\end{equation}
  where $A(\mathbf{y})^{-1/2}$ is the unique matrix with $A(\mathbf{y})^{-1/2} \cdot A(\mathbf{y})^{-1/2} = A(\mathbf{y})^{-1}$.
	If $f(\mathbf{x}+i\mathbf{y})$ vanished for some $\mathbf{x}\in\R^n$, then the Hermitian matrix 
$\sum x_j A(\mathbf{y})^{-1/2}A_jA(\mathbf{y})^{-1/2}$
would have the eigenvalue $-i$. But this is impossible, since Hermitian matrices have only real eigenvalues. Hence, $\mathbf{y}\notin\mathcal{I}(f)$.
	
	Conversely, let $f(\mathbf{x}+i\mathbf{y})=0$. Assuming $A(\mathbf{y})\succ0$, the right hand side of~\eqref{eq:example-determinant} vanishes, which again gives the contradiction that $-i$ is an eigenvalue of the Hermitian matrix.
\end{example}

In the following, we consider the number and structure of hyperbolicity cones of a homogeneous polynomial. In order to see that there can appear many hyperbolicity cones, consider polynomials of the form 
$f(\mathbf{z}) = \det(A_1z_1+\dots+A_nz_n)$ with real diagonal $d\times d$-matrices. 
This is a special case of Example \ref{ex:spectrahedron}, where the spectrahedral hyperbolicity cone becomes a polyhedron. In that case, it becomes profitable to use the viewpoint of imaginary
projections to describe exactly the hyperbolicity cones of their complement. Namely,
$\mathcal{I}(f)$ is an algebraic variety here, whereas the hyperbolicity cones are semi-algebraic.

\begin{thm}\label{thm:det-diag-imag-proj}
	Let $f(\mathbf{z})=\det(A_1z_1+\dots+A_nz_n)$, where $A_1,\ldots,A_n$ are $d\times d$ real diagonal matrices, $A_j=\diag(a_1^{(j)},\ldots,a_n^{(j)})$. Then $\mathcal{I}(f)$ is the hyperplane arrangement 
	\begin{equation}\label{eq:det-diag-imag-proj}
	\mathcal{I}(f)=\bigcup_{l=1}^d\{\mathbf{y}\in\R^n:\sum_{j=1}^n a_{l}^{(j)}y_j=0\}.
	\end{equation}
\end{thm}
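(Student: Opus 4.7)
The plan is to exploit the fact that diagonal matrices make $f$ factor into linear forms, and then compute the imaginary projection of each factor directly.

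First, I would observe that since each $A_j$ is diagonal, the matrix $A_1 z_1 + \dots + A_n z_n$ is also diagonal, with $l$-th diagonal entry equal to the linear form $L_l(\mathbf{z}) := \sum_{j=1}^n a_l^{(j)} z_j$. Hence
\[
f(\mathbf{z}) \ = \ \prod_{l=1}^d L_l(\mathbf{z}),
\]
so $\mathcal{V}(f) = \bigcup_{l=1}^d \mathcal{V}(L_l)$, and consequently $\mathcal{I}(f) = \bigcup_{l=1}^d \mathcal{I}(L_l)$.

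Second, I would compute $\mathcal{I}(L_l)$ for each real linear form. Writing $z_j = x_j + i y_j$ and using that each coefficient $a_l^{(j)}$ is real, the equation $L_l(\mathbf{x} + i \mathbf{y}) = 0$ splits into the two real conditions $\sum_{j=1}^n a_l^{(j)} x_j = 0$ and $\sum_{j=1}^n a_l^{(j)} y_j = 0$. Since $\mathbf{x} \in \R^n$ may be chosen freely to satisfy the first equation (or, if $L_l \equiv 0$, the statement is trivial on that factor), the projection onto the $\mathbf{y}$-coordinates gives exactly the hyperplane
\[
\mathcal{I}(L_l) \ = \ \{\mathbf{y} \in \R^n \, : \, \textstyle\sum_{j=1}^n a_l^{(j)} y_j = 0\}.
\]

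Combining the two steps yields precisely the hyperplane arrangement in \eqref{eq:det-diag-imag-proj}. There is no real obstacle here; the only delicate point is the reality of the coefficients, which is what allows the factor-wise real/imaginary split and makes the projection an honest hyperplane rather than all of $\R^n$.
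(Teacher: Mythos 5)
Your proof is correct and follows essentially the same route as the paper: both factor the determinant of the diagonal pencil into the product of linear forms $L_l(\mathbf{z})=\sum_j a_l^{(j)} z_j$ and then analyze the imaginary projection factor by factor, using that the real coefficients decouple the vanishing of $L_l(\mathbf{x}+i\mathbf{y})$ into independent real and imaginary conditions. The paper chooses the specific witness $\mathbf{x}=\mathbf{y}$ for the forward inclusion, whereas you simply observe that $\mathbf{x}$ can be chosen freely to kill the real part; this is a cosmetic difference, not a different argument.
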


Lemma~\ref{th:number-hyperbolicity-cones-linear-polynomials} in Section~\ref{se:proofs} will show that if $d'$ is the number of distinct hyperplanes in \eqref{eq:det-diag-imag-proj}, then the number of complement components is at most $2^{d'}$ for $d'\leq n$ and at most $2\cdot\sum_{k=0}^{n-1}\binom{d'-1}{k}$ for $d'>n$.

\begin{proof}
	We have 
	\begin{equation}\label{eq:det-diag-imag-proj-proof}
	\det(A_1z_1+\dots+A_nz_n)=\prod_{l=1}^d\left(\sum_{j=1}^n a_l^{(j)}x_l+i\sum_{j=1}^n a_l^{(j)}y_l\right).
	\end{equation}
	Assume that there is some $\mathbf{y}\in\R^n$ such that $\sum_{j=1}^n a_{l}^{(j)}y_j=0$ for an $l\in\{1,\ldots,d\}$. Then, choosing $\mathbf{x}=\mathbf{y}$, we have $f(\mathbf{x}+i\mathbf{y})=0$.\\
	Assume now $\sum_{j=1}^n a_{l}^{(j)}y_j\neq0$ for all $1\leq l\leq d$. Since \eqref{eq:det-diag-imag-proj-proof} vanishes if and only if at least one factor vanishes, we have $f(\mathbf{x}+i\mathbf{y})\neq0$ for all $\mathbf{x}\in\R^n$.
\end{proof}

We conclude the section with an exact statement on the number of
hyperbolicity cones in the bivariate case.

\begin{thm}
	Let $f\in\R[z_1,z_2]$ be homogeneous and of degree $d$. Then $f$ has at most $2d$ hyperbolicity cones. The exact number depends on the 
number of distinct solutions of $f(1,z_2)=0$:
	\begin{enumerate}
		\item If there is at least one complex solution, then there are no hyperbolicity cones.
		\item If there are $k$ distinct real solutions, then there are $2k$ hyperbolicity cones.
	\end{enumerate}
\end{thm}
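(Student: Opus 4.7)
The plan is to apply Theorem~\ref{thm:connection-complement-hyperbolicity-cone}, which identifies hyperbolicity cones with components of $\mathcal{I}(f)^{\compl}$, and then determine $\mathcal{I}(f)$ explicitly by exploiting the complete factorization of $f$ over $\C$. Since $f$ is bivariate and homogeneous of degree $d$, write $f = c\prod_{j=1}^{d} L_j$ with complex linear forms $L_j(z_1,z_2) = \beta_j z_1 + \gamma_j z_2$. The identity $\mathcal{V}(f) = \bigcup_j \mathcal{V}(L_j)$ gives $\mathcal{I}(f) = \bigcup_{j=1}^{d} \mathcal{I}(L_j)$, so everything reduces to computing $\mathcal{I}(L)$ for a single linear form.

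The central step is a clean dichotomy for $\mathcal{I}(L)$. Assuming $\gamma \ne 0$, I would parameterize $\mathcal{V}(L)$ by $z_2 = -(\beta/\gamma)z_1$, write $-\beta/\gamma = a + ib$ with $a,b \in \R$, and substitute $z_1 = x_1 + iy_1$ to obtain $y_2 = a y_1 + b x_1$. If $b \ne 0$ (the coefficient ratio is non-real), then for any prescribed $(y_1,y_2) \in \R^2$ one solves for $x_1$, so $\mathcal{I}(L) = \R^2$; if $b = 0$ (the ratio is real), then $\mathcal{I}(L) = \{(y_1,y_2) : y_2 = a y_1\}$ is a real line through the origin. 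The residual case $\gamma = 0$ corresponds to the factor $z_1$ and yields the real line $\{y_1 = 0\}$.

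Combining these observations, case (1) is immediate: a non-real solution $\alpha$ of $f(1,z_2)=0$ provides a factor $z_2 - \alpha z_1$ with $\mathcal{I}(z_2 - \alpha z_1) = \R^2$, which forces $\mathcal{I}(f) = \R^2$ and hence no hyperbolicity cones. In case (2), the $k$ distinct real roots $\alpha_1,\ldots,\alpha_k$ produce $k$ pairwise distinct lines $\{y_2 = \alpha_j y_1\}$ through the origin, whose union partitions $\R^2 \setminus \mathcal{I}(f)$ into exactly $2k$ open sectors, each a hyperbolicity cone by Theorem~\ref{thm:connection-complement-hyperbolicity-cone}. The overall bound $2d$ then follows from $k \le d$, consistent with the specialization of Theorem~\ref{th:thm1} to $n=2$. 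The only real technical content is the linear-form computation; the enumeration of sectors from a central line arrangement and the upper bound are then routine.
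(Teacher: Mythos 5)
Your proof takes a genuinely different route from the paper's. The paper's own argument is very short: it invokes Proposition~\ref{pr:limitdirections} on limit directions, observes that for homogeneous $f$ one has $\mathcal{I}(f)\cap\Sph^1=\mathcal{I}_\infty(f)$, and then reads off the number of arcs in the complement on the circle (either none, if some $a_j$ is non-real, or $2k$, if there are $k$ distinct real $a_j$). You instead factor $f$ completely over $\C$ into linear forms $L_j$, compute $\mathcal{I}(L_j)$ for each one (a line through the origin when $L_j$ is proportional to a real form, all of $\R^2$ otherwise --- essentially re-deriving Proposition~\ref{prop:linear-poly}), and then count sectors in the resulting central line arrangement. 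Both arguments are sound and give the same conclusion; yours is more elementary and self-contained, while the paper's is shorter because it reuses the limit-direction machinery developed for the non-homogeneous setting.

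There is one loose end worth flagging. You correctly note that the residual case $\gamma=0$ corresponds to the factor $z_1$ and contributes the line $\{y_1=0\}$, but in case (2) you then count only the $k$ lines $\{y_2=\alpha_j y_1\}$ coming from the finite real roots of $f(1,z_2)=0$. If $z_1\mid f$, the line $\{y_1=0\}$ is an additional component of $\mathcal{I}(f)$ not captured by any root of $f(1,z_2)$, and the complement then has $2(k+1)$ sectors rather than $2k$. The theorem statement, and the paper's proof through the hypothesis of Proposition~\ref{pr:limitdirections} that all zeros at infinity have the form $(0:1:a_j)$, implicitly assume $f(0,1)\neq 0$, i.e.\ $z_1\nmid f$, so that $f(1,z_2)$ has full degree $d$. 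You should either state this normalization explicitly at the outset, or phrase the count in terms of distinct projective roots of $f$ in $\P^1(\R)$ (which would absorb the possible root at infinity). As written, your argument makes this edge case more visible than the paper's does, which is a virtue, but you should not leave it silently unreconciled with the claimed count.
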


For the proof, recall the definition of the set of \emph{limit directions} as the set
of limit points of points in $\frac{1}{r}\mathcal{I}(f)\cap\Sph^{n-1}$ (for $r \to \infty$),
written $\mathcal{I}_\infty(f) \ = \ \lim_{r\rightarrow \infty}\left(\frac{1}{r}\mathcal{I}(f)\cap\Sph^{n-1}\right)$,
which describes the behavior at infinity of the imaginary projection of a polynomial $f\in\C[\mathbf{z}]$.
The following statement was shown in \cite[Cor. 6.7]{TTT}.

\begin{prop}\label{pr:limitdirections}
        Let $f\in\C[z_1,z_2]$ be of total degree $d$ and assume its homogenization $f_h \in \C[z_0,z_1,z_2]$ has the zeros at infinity $(0:1:a_j)$, $j=1,\ldots,d$. Then
\[
  \mathcal{I}_\infty(f) \ = \ \begin{cases}
     \bigcup\limits_{j=1}^d \left\{\pm\frac{1}{\sqrt{1+a_j^2}}(1,a_j)\right\} &
        \text{if all $a_j$ are real,}\\
          \Sph^1 & \text{otherwise.}
   \end{cases}
\]
\end{prop}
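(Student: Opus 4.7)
The plan is to read off $\mathcal{I}_\infty(f)$ directly from the asymptotic branches of the complex curve $\mathcal{V}(f) \subset \C^2$. I would first pass to the projective closure $\overline{\mathcal{V}(f)} \subset \P^2$, whose points at infinity are precisely the projective zeros of the leading form $f_d$ of $f$, i.e.\ (by assumption) the points $(0:1:a_j)$ for $j=1,\ldots,d$, counted with multiplicity. Near each such point, $\mathcal{V}(f)$ decomposes into one or several local branches, each parametrized by a Puiseux series in $1/z_1$; in the generic (simple-root) case the branch has the form $z_2 = a_j z_1 + c_0 + O(1/z_1)$ for $|z_1|$ large.

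The second step is to substitute $z_k = x_k + i y_k$ into such a branch and watch how the normalized imaginary part $(y_1,y_2)/\|(y_1,y_2)\|$ behaves as $|z_1|\to\infty$. If $a_j\in\R$, then $z_2 = a_j z_1 + O(1)$ gives $y_2 = a_j y_1 + O(1)$, so the only limit direction produced by this branch is the antipodal pair $\pm(1,a_j)/\sqrt{1+a_j^2}$. If instead $a_j=\alpha+i\beta$ with $\beta\ne 0$, the expansion of $z_2 = a_j z_1 + O(1)$ yields $y_2 = \beta x_1 + \alpha y_1 + O(1)$. Here $x_1$ is a real parameter genuinely independent of $y_1$, so setting $y_1 = r u_1$ and $x_1 = (r u_2 - \alpha r u_1)/\beta$ gives $(y_1,y_2)=(ru_1,ru_2)+O(1)$ for any prescribed $(u_1,u_2)\in\Sph^1$. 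A single non-real $a_j$ therefore already sweeps out all of $\Sph^1$ in the limit.

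Combining the branch contributions yields the dichotomy in the proposition: if every $a_j$ is real, $\mathcal{I}_\infty(f)$ is exactly the union $\bigcup_{j=1}^d\{\pm(1,a_j)/\sqrt{1+a_j^2}\}$, while a single non-real $a_j$ forces $\mathcal{I}_\infty(f)=\Sph^1$. The step I expect to be most delicate is the Puiseux bookkeeping at non-generic points at infinity, namely (i) when $a_j$ is a multiple root of $f_d(1,\cdot)$, so that the branch acquires a fractional exponent $z_2 = a_j z_1 + c\, z_1^{1-1/m} + \cdots$, and (ii) the special point $(0:0:1)$ at infinity, occurring when the coefficient of $z_2^d$ in $f_d$ vanishes, in which case the roles of $z_1$ and $z_2$ must be swapped. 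In both situations one still reads off a leading linear relation $z_2 = a_j z_1 + o(z_1)$ (with $a_j\in\C\cup\{\infty\}$), and the same real/non-real trichotomy on $a_j$ drives the conclusion, so the argument extends uniformly once the Puiseux parametrizations are in place.
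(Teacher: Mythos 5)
The paper does not actually prove this proposition; it quotes it from \cite[Cor.~6.7]{TTT}, so there is no in-paper argument to compare against. Your strategy --- reading off $\mathcal{I}_\infty(f)$ from the Puiseux branches of $\mathcal{V}(f)$ at its points at infinity --- is the natural one, and your treatment of the generic case is correct: at a simple zero $(0:1:a_j)$ the branch is $z_2=a_jz_1+c_0+O(1/z_1)$; for real $a_j$ the bounded correction forces $y_2=a_jy_1+O(1)$ and hence only the directions $\pm(1,a_j)/\sqrt{1+a_j^2}$, while for non-real $a_j$ the free parameter $x_1$ sweeps out all of $\Sph^1$. (Your worry (ii) about the point $(0:0:1)$ is vacuous here: the hypothesis that all $d$ zeros at infinity have the form $(0:1:a_j)$ already excludes it.) This also covers the only place the present paper uses the proposition, namely homogeneous bivariate $f$, where the branches are exact lines $z_2=a_jz_1$.

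The genuine gap is your case (i), where you assert that the argument ``extends uniformly'' to multiple roots because one still has $z_2=a_jz_1+o(z_1)$. That implication fails: $z_2=a_jz_1+o(z_1)$ only gives $y_2=a_jy_1+o(|z_1|)$, and the error is small relative to $|z_1|$, not relative to $\|(y_1,y_2)\|$; when $z_1$ runs near the real axis, $y_1$ is much smaller than $|z_1|$ and the imaginary part of a fractional term $c\,z_1^{1-1/m}$ can dominate. Concretely, take $f=z_2^2-z_1$: both zeros at infinity are $(0:1:0)$, so $a_1=a_2=0$ are real and the displayed formula would predict $\mathcal{I}_\infty(f)=\{\pm(1,0)\}$. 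But $\mathcal{V}(f)=\{(z_2^2,z_2):z_2\in\C\}$ yields $\mathcal{I}(f)=\{(2st,t):s,t\in\R\}\supseteq\R\times(\R\setminus\{0\})$, whence $\mathcal{I}_\infty(f)=\Sph^1$; on the branch $z_2=z_1^{1/2}$ (with $a_j=0$), taking $z_1=-t$ with $t\to\infty$ gives $y_1=0$ and $y_2=\sqrt{t}\to\infty$, so $(0,1)$ is already a limit direction. Hence either the proposition carries an implicit hypothesis that the $a_j$ are distinct (equivalently, that every branch at infinity is unramified with an integral expansion), in which case your generic-case argument is already a complete proof, or the statement as quoted is too strong and no Puiseux bookkeeping will rescue case (i). Either way, the step you flagged as delicate is not a uniform extension but precisely where the dichotomy can break down, and your proof is incomplete without restricting to the unramified situation.
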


\begin{proof}[Proof of Theorem~2.6.]
By Proposition~\ref{pr:limitdirections}, either every point on $\Sph^1$ is a limit
direction of $f$, or $f$ has at most $2d$ limit directions. Since
$f$ is homogeneous, we have $\mathcal{I}_{\infty}(f) = \mathcal{I}(f) \cap \Sph^{n}$.
Hence, every connected component of the complement of 
$\mathcal{I}(f)\cap\Sph^1$ on the sphere
corresponds to a hyperbolicity cone of $f$.
The more precise characterization then follows from the more refined 
characterization in Proposition~\ref{pr:limitdirections} as well.
\end{proof}

\section{Proof of Theorem~\ref{th:thm1}\label{se:proofs}}

In this section, we prove Theorem~\ref{th:thm1} on the maximal number of hyperbolicity cones of homogeneous polynomials.
Moreover, as a consequence of the results on the hyperbolicity cones, we provide a characterization of the boundary of the imaginary projections of homogeneous polynomials in Theorem~\ref{th:boundaryhomog}.

For the maximal number of hyperbolicity cones,
it will turn out that this number is achieved by polynomials which are
products of independent linear factors.

\begin{lemma}\label{th:number-hyperbolicity-cones-linear-polynomials}
	Let $f(\mathbf{z})=p_1(\mathbf{z})\cdots p_d(\mathbf{z}) \in 
\C[\mathbf{z}]$ 
be a product of $d$ linear polynomials $p_1,\ldots,p_d$.
Unless $\mathcal{I}(f) = \R^n$, 
the number of hyperbolicity cones of $f$ is positive and at most
		\begin{enumerate}
			\item $2^{d}$ for $1\leq d\leq n,$
			\item $2 \sum_{k=0}^{n-1}\binom{d-1}{k}$ for $d>n$.
		\end{enumerate}
\end{lemma}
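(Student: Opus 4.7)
The plan is to reduce the statement to the classical region-counting bound for central hyperplane arrangements and to check carefully that the imaginary projection of a product of linear forms really is a central hyperplane arrangement.

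First I would analyze a single linear form. Write $p(\mathbf{z}) = (\mathbf{a}+i\mathbf{b})^T\mathbf{z}$ with $\mathbf{a},\mathbf{b}\in\R^n$. Splitting $\mathbf{z}=\mathbf{x}+i\mathbf{y}$, the equation $p(\mathbf{x}+i\mathbf{y})=0$ becomes the linear system $\mathbf{a}^T\mathbf{x}-\mathbf{b}^T\mathbf{y}=0$ and $\mathbf{b}^T\mathbf{x}+\mathbf{a}^T\mathbf{y}=0$ in $\mathbf{x}$. If $\mathbf{a}$ and $\mathbf{b}$ are $\R$-linearly dependent, then after dividing by a scalar we may assume $p$ has real coefficients, and $\mathcal{I}(p)$ is the real hyperplane $\{\mathbf{y}:\mathbf{a}^T\mathbf{y}=0\}$. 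Otherwise $\mathbf{a},\mathbf{b}$ are linearly independent, and for \emph{every} $\mathbf{y}\in\R^n$ the above $2\times n$ system has a solution $\mathbf{x}$, so $\mathcal{I}(p)=\R^n$. Thus either $\mathcal{I}(p_j)=\R^n$ or $\mathcal{I}(p_j)=H_j$ is a real linear hyperplane through the origin.

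Next, since $f=p_1\cdots p_d$, the variety satisfies $\mathcal{V}(f)=\bigcup_j\mathcal{V}(p_j)$, and projection commutes with union, so $\mathcal{I}(f)=\bigcup_{j=1}^d \mathcal{I}(p_j)$. The assumption $\mathcal{I}(f)\neq\R^n$ therefore forces each $\mathcal{I}(p_j)$ to be a hyperplane $H_j$ through the origin. Consequently $\mathcal{I}(f)$ is the central hyperplane arrangement $\bigcup_{j=1}^d H_j$, and by Theorem~\ref{thm:connection-complement-hyperbolicity-cone} (or Corollary~\ref{cor:hom-complement-two-cones}) the hyperbolicity cones of $f$ are exactly the open chambers of this arrangement. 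The complement of any finite union of proper linear hyperplanes in $\R^n$ is nonempty (and in fact splits into at least two antipodal chambers), giving positivity.

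Finally I would invoke the classical bound of Schl\"afli/Zaslavsky for the number of regions of a central arrangement of $d$ hyperplanes in $\R^n$, which is at most $2^{d}$ when $d\le n$ and at most $2\sum_{k=0}^{n-1}\binom{d-1}{k}$ when $d>n$, with equality in general position (any $n$ of the hyperplanes linearly independent). This is precisely the bound claimed; translating back, the general-position condition is that any $n$ of the linear forms $p_j$ are linearly independent, matching the equality condition stated later in Theorem~\ref{th:thm1}.

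The only real subtlety is the first step: one has to be careful that complex linear factors do not silently produce more than one hyperplane or some lower-dimensional real variety in $\R^n$; the case distinction based on the $\R$-rank of $\{\mathbf{a},\mathbf{b}\}$ handles this cleanly. The remaining combinatorial bound is classical and can simply be cited, so no additional technical machinery is needed.
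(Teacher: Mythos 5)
Your proposal is correct and takes essentially the same route as the paper: reduce $\mathcal{I}(f)$ to a central hyperplane arrangement (each $\mathcal{I}(p_j)$ is a hyperplane unless it is all of $\R^n$, which is ruled out by $\mathcal{I}(f)\neq\R^n$), identify the complement chambers with the hyperbolicity cones via Theorem~\ref{thm:connection-complement-hyperbolicity-cone}, and cite the Schl\"afli/Zaslavsky region count for central arrangements in general position. The only difference is cosmetic: you rederive from scratch the dichotomy ``hyperplane or all of $\R^n$'' for a single complex linear form (via the $\R$-rank of $\{\mathbf a,\mathbf b\}$), whereas the paper simply cites its Proposition~\ref{prop:linear-poly}; otherwise the argument coincides.
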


Before the proof, we recall the following statement on linear polynomials
from~\cite{TTT}, phrased there in the affine setting.

\begin{prop}\label{prop:linear-poly}
For every homogeneous linear polynomial
$f(\mathbf{z}) = \sum_{j=1}^n a_j z_j \in \R[\mathbf{z}]$
with $(a_1, \ldots, a_n) \neq 0$, we have
$\mathcal{I}(f) = \mathcal{V}_{\R}(\sum_{j=1}^n a_j y_j)$.
If the coefficients of $f$ are complex, then $\mathcal{I}(f)$ is either
a hyperplane or $\mathcal{I}(f) = \R^n$.
\end{prop}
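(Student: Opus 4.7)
The plan is to compute $\mathcal{I}(f)$ directly by writing $\mathbf{z} = \mathbf{x} + i\mathbf{y}$, splitting $f(\mathbf{x}+i\mathbf{y})$ into real and imaginary parts, and translating the condition ``$\mathbf{y}\in\mathcal{I}(f)$'' into the solvability of a real linear system in $\mathbf{x}$ with $\mathbf{y}$ regarded as a parameter.

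For the real case, write $f(\mathbf{x}+i\mathbf{y})=\sum_{j=1}^n a_j x_j \,+\, i\sum_{j=1}^n a_j y_j$. This vanishes if and only if both real and imaginary parts are zero. Since $(a_1,\ldots,a_n)\neq 0$, the equation $\sum a_j x_j = 0$ is always solvable in $\mathbf{x}$, so the condition on $\mathbf{y}$ reduces to $\sum a_j y_j = 0$. This yields $\mathcal{I}(f)=\mathcal{V}_{\R}(\sum a_j y_j)$.

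For the complex case, decompose $a_j = \alpha_j + i\beta_j$ with $\alpha_j,\beta_j\in\R$ and set $\boldsymbol{\alpha}=(\alpha_1,\ldots,\alpha_n)$, $\boldsymbol{\beta}=(\beta_1,\ldots,\beta_n)$. A short calculation gives
\[
  f(\mathbf{x}+i\mathbf{y}) \;=\; \bigl(\langle\boldsymbol{\alpha},\mathbf{x}\rangle - \langle\boldsymbol{\beta},\mathbf{y}\rangle\bigr) \;+\; i\bigl(\langle\boldsymbol{\beta},\mathbf{x}\rangle + \langle\boldsymbol{\alpha},\mathbf{y}\rangle\bigr),
\]
so $\mathbf{y}\in\mathcal{I}(f)$ iff the linear system $\langle\boldsymbol{\alpha},\mathbf{x}\rangle=\langle\boldsymbol{\beta},\mathbf{y}\rangle$, $\langle\boldsymbol{\beta},\mathbf{x}\rangle=-\langle\boldsymbol{\alpha},\mathbf{y}\rangle$ in the unknown $\mathbf{x}$ is solvable. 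I would then split on the $\R$-linear dependence of $\boldsymbol{\alpha},\boldsymbol{\beta}$: if they are dependent, then (after a harmless complex rescaling) $f$ is a scalar multiple of a real linear form, so the first part of the proposition applies and $\mathcal{I}(f)$ is a hyperplane; if they are independent, then the linear map $\mathbf{x}\mapsto(\langle\boldsymbol{\alpha},\mathbf{x}\rangle,\langle\boldsymbol{\beta},\mathbf{x}\rangle)$ is surjective onto $\R^2$, so the system is solvable for every right-hand side and $\mathcal{I}(f)=\R^n$.

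No step looks especially delicate; the only point requiring a little care is the subcase of the complex analysis where $\boldsymbol{\alpha},\boldsymbol{\beta}$ are linearly dependent, where one must verify that after factoring out a complex scalar one does obtain a nonzero real linear form so that the real case can be invoked. This is immediate from the assumption $(a_1,\ldots,a_n)\neq 0$, which forces at least one of $\boldsymbol{\alpha},\boldsymbol{\beta}$ to be nonzero.
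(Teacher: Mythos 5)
Your computation is correct: splitting $f(\mathbf{x}+i\mathbf{y})$ into real and imaginary parts and analyzing the solvability of the resulting linear system in $\mathbf{x}$ settles both the real and the complex case, and the dependent subcase is handled properly by rescaling to a nonzero real linear form. Note that the paper itself gives no proof of this proposition; it is recalled from the reference [TTT] (where it is phrased affinely), so your elementary argument is exactly the kind of verification that is being deferred there, and it is sound.
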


\medskip

\noindent
\emph{Proof of Theorem \ref{th:number-hyperbolicity-cones-linear-polynomials}.}
Let $f(\mathbf{z})=p_1(\mathbf{z})\cdots p_d(\mathbf{z})$ be a product of 
$d$ linear polynomials $p_1,\ldots,p_d$ and $\mathcal{I}(f) \neq \R^n$.
Since $\mathcal{I}(p_j)$ is a hyperplane for all $j$, the imaginary projection $\mathcal{I}(f)$
defines a central hyperplane arrangement in $\R^n$, where central expresses that
all the hyperplanes are passing through the origin. We can assume that the hyperplanes are in 
general position, since otherwise the number of hyperbolicity cones may only become smaller.

By Zaslavsky's results \cite{zaslavsky-1975} (see also \cite[Prop.~2.4]{stanley-hyperplane}),
the number of chambers in an affine hyperplane arrangement of $d$
affine hyperplanes in general position is $\sum_{k=0}^{n} \binom{d}{k}$,
out of which $\binom{d-1}{n}$ chambers are bounded. Determining the number of chambers
in a central hyperplane arrangement of $d$ affine hyperplanes in general
position can be reduced to an affine hyperplane arrangement in
$\R^{n-1}$ and gives
\[
  \sum_{k=0}^{n-1} \binom{d}{k} + \binom{d-1}{n-1} \ = \ 
  2 \sum_{k=0}^{n-1} \binom{d-1}{k} \, .
\]
For $1 \le d \le n$, by the Binomial Formula this specializes to the 
expression given.
\hfill $\Box$

\medskip

By the results of Helton and Vinnikov \cite{HV2007},
the real variety of a smooth and hyperbolic polynomial consists of nested ovals 
(and a pseudo-line in case of odd degree) in the projective space $\P^{n-1}$.
Hence, the hyperbolicity cone is unique (up to sign). Motivated by an earlier 
version of the present article, Kummer was able to weaken the precondition
and showed that even for irreducible hyperbolic polynomials the hyperbolicity 
cone is unique (up to sign). 

\begin{prop}[\cite{kummer2017}]\label{th:thm2}
	Let $f \in \R[\mathbf{z}]$ be an irreducible homogeneous polynomial. 
  Then $f$ has at most two hyperbolicity cones
       (i.e., one pair)
      and thus at most two components in the complement of $\mathcal{I}(f)$.
\end{prop}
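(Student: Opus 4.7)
My plan is to argue by contradiction. Suppose the irreducible homogeneous polynomial $f$ has more than one pair of hyperbolicity cones. By Corollary~\ref{cor:hom-complement-two-cones}, these come in antipodal pairs, so there exist two pairs $\pm C_1$ and $\pm C_2$ with $C_1 \neq \pm C_2$; fix $\mathbf{e}_i \in C_i$ for $i=1,2$. The goal is to derive a nontrivial factorization of $f$, contradicting irreducibility. Before proceeding, note that the case $n = 2$ follows immediately from the bivariate Theorem~2.6: an irreducible bivariate homogeneous real polynomial of degree at least $2$ has no real linear factor, so $f(1, z_2)$ has no real root and hence no hyperbolicity cones, while an irreducible linear form has exactly one pair.

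For $n \geq 3$ I would first reduce to the ternary case. Choose a real $3$-dimensional subspace $W \subseteq \R^n$ containing $\mathbf{e}_1$ and $\mathbf{e}_2$, generic subject to this constraint, and set $g := f|_W$. Hyperbolicity is inherited: $g$ is homogeneous of degree $d$ in three variables, hyperbolic with respect to both $\mathbf{e}_1$ and $\mathbf{e}_2$ with hyperbolicity cones $C_1 \cap W$ and $C_2 \cap W$, which remain non-antipodal for generic $W$. For $n \geq 4$, Bertini's irreducibility theorem applied to the irreducible hypersurface $\mathcal{V}(f) \subset \mathbb{P}^{n-1}_\C$ guarantees that a generic such $W$ keeps $g$ irreducible over $\C$.

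For an irreducible ternary hyperbolic polynomial I would invoke the Helton--Vinnikov theorem~\cite{HV2007}: in the smooth case the associated real projective curve is a sequence of nested ovals (together with a pseudo-line when $d$ is odd), and a topological inspection shows that among the components of the complement only the innermost disk (together with its projective antipode) lifts to a pair of convex open double cones in $\R^3$. All other complement components are either annular or wrap around in a way that breaks convexity of the affine cone over them. Hence there is a unique antipodal pair of hyperbolicity cones, contradicting $C_1 \cap W \neq \pm (C_2 \cap W)$.

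The main obstacle is that Helton--Vinnikov in its classical form requires smoothness, while after restriction $g$ might be singular. I would bypass this via a Nuij-style perturbation: approximate $g$ by smooth irreducible hyperbolic polynomials $g_t$ (for small $t > 0$) with $g_t \to g$. By Theorem~\ref{thm:connection-complement-hyperbolicity-cone} the hyperbolicity cones are exactly the components of $\mathcal{I}(g)^\compl$, and $\mathcal{I}(g_t) \to \mathcal{I}(g)$ under such a deformation; since each $g_t$ has a unique pair of hyperbolicity cones, in the limit $g$ has at most one pair, yielding the desired contradiction.
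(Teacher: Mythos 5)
The paper does not prove this proposition at all: it is quoted from Kummer \cite{kummer2017}, and the surrounding discussion makes clear that the \emph{smooth} case (via the nested-ovals picture) was the previously known statement, while the extension to arbitrary irreducible polynomials is precisely Kummer's contribution. Your proposal reproves the smooth case and then tries to bridge the gap to the singular case by approximation; that last step is where the argument breaks down.

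Concretely, the perturbation step fails for two reasons. First, a Nuij-style perturbation is taken \emph{with respect to a fixed direction}: it produces strictly hyperbolic (hence smooth) polynomials $g_t$ hyperbolic with respect to $\mathbf{e}_1$, but there is no reason $g_t$ remains hyperbolic with respect to $\mathbf{e}_2$, so the uniqueness of the pair of cones of $g_t$ yields no contradiction with the assumed second pair of $g$. Second, even granting a deformation $g_t\to g$ through smooth hyperbolic polynomials, the number of complement components of the imaginary projection is not upper semicontinuous in the limit: components can split. For instance $g_t=z_1z_2-t(z_1^2+z_2^2)$ is an irreducible indefinite quadratic with exactly one pair of hyperbolicity cones for every small $t>0$, yet the limit $z_1z_2$ has two pairs; so ``each $g_t$ has one pair, hence $g$ has at most one pair'' is false as a general principle, and your argument never invokes the irreducibility of $g$ at the point where it would have to. The singular irreducible case is the genuine content of the proposition and Kummer's proof proceeds by entirely different (positivity/interlacing-type) methods rather than by degeneration from the smooth case. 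Two secondary issues you would also need to address: Bertini irreducibility for a generic $3$-plane constrained to contain the fixed $2$-plane $\linspan\{\mathbf{e}_1,\mathbf{e}_2\}$ is not the base-point-free version of the theorem, and irreducibility in $\R[\mathbf{z}]$ does not give irreducibility over $\C$, which your Helton--Vinnikov step implicitly uses.
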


\begin{lemma}\label{lemma:hyperbolicity-cones-product-smooth-linear}
	Let $f_1, f_2 \in \C[\mathbf{z}]$ be homogeneous and $f_1$ be irreducible. 
Then the number of hyperbolicity cones of $f_1 \cdot f_2$ is at most twice the
number of hyperbolicity cones of $f_2$.
\end{lemma}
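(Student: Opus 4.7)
The plan is to reduce the bound to a simple product estimate for connected components. From the set-theoretic identity $\mathcal{V}(f_1 f_2) = \mathcal{V}(f_1) \cup \mathcal{V}(f_2)$ one immediately obtains
\[
  \mathcal{I}(f_1 f_2) \ = \ \mathcal{I}(f_1) \cup \mathcal{I}(f_2), \qquad \mathcal{I}(f_1 f_2)^{\compl} \ = \ \mathcal{I}(f_1)^{\compl} \cap \mathcal{I}(f_2)^{\compl}.
\]
By Theorem~\ref{thm:connection-complement-hyperbolicity-cone}, in each case the complement decomposes as a disjoint union of the hyperbolicity cones of the corresponding polynomial.

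The central step is to identify the hyperbolicity cones of $f_1 f_2$ as the nonempty intersections $C_1 \cap C_2$, where $C_1$ ranges over the hyperbolicity cones of $f_1$ and $C_2$ over those of $f_2$. Since the components of $\mathcal{I}(f_i)^{\compl}$ partition $\mathcal{I}(f_i)^{\compl}$, every point of $\mathcal{I}(f_1 f_2)^{\compl}$ lies in a unique such $C_1$ and a unique such $C_2$; thus the collection of intersections $C_1 \cap C_2$ forms a disjoint cover of $\mathcal{I}(f_1 f_2)^{\compl}$. Each $C_i$ is an open convex cone, so each nonempty $C_1 \cap C_2$ is open, convex, and hence connected. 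This forces every nonempty $C_1 \cap C_2$ to coincide with a connected component of $\mathcal{I}(f_1 f_2)^{\compl}$, i.e.\ with a hyperbolicity cone of $f_1 f_2$. In particular, the number of hyperbolicity cones of $f_1 f_2$ is at most the product of the numbers of hyperbolicity cones of $f_1$ and $f_2$.

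Finally, I would invoke Proposition~\ref{th:thm2}: since $f_1$ is irreducible, it has at most two hyperbolicity cones. Combined with the product estimate above, this gives the desired bound of $2 \cdot \#\{\text{hyperbolicity cones of } f_2\}$. I do not expect a real obstacle here; the only place where minor care is required is the identification of components with intersections $C_1 \cap C_2$, but once Theorem~\ref{thm:connection-complement-hyperbolicity-cone} is available, this is automatic from convexity of the individual cones.
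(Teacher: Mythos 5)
Your proof is correct and follows the same plan as the paper's: every hyperbolicity cone of the product is a nonempty intersection $C_1 \cap C_2$ of hyperbolicity cones of the factors, and Proposition~\ref{th:thm2} bounds the number of choices for $C_1$ by two. The only difference is cosmetic --- you derive the decomposition into intersections from $\mathcal{I}(f_1 f_2)^{\compl} = \mathcal{I}(f_1)^{\compl} \cap \mathcal{I}(f_2)^{\compl}$ together with Theorem~\ref{thm:connection-complement-hyperbolicity-cone} and a standard ``disjoint open cover by connected sets'' argument, whereas the paper simply asserts this as a known fact about hyperbolicity cones of products.
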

\begin{proof}
First note that any hyperbolicity cone $C$ of $f_1 \cdot f_2$ is of the form
$C = C_1 \cap C_2$ with hyperbolicity cones $C_1$ and $C_2$ of $f_1$ and $f_2$.

We can assume that $f_1$ and $f_2$ are hyperbolic. Then, by Theorem \ref{th:thm2}, $f_1$ has at most one pair of hyperbolicity cones. Intersecting these two cones with the hyperbolicity cones of $f_2$ gives the bound.
\end{proof}

Since the lemma inductively extends to an arbitrary number of factors, two or more pairs of hyperbolicity cones only arise from different factors in the polynomial $f$. This fact is captured explicitly by Theorem \ref{th:thm1}, whose proof is now given. 

\begin{proof}[Proof of Theorem \ref{th:thm1}]
Since the case $n=1$ is trivial, we can assume $n \ge 2$. 
Let $f=p_1\cdots p_k$ be a homogeneous polynomial of degree $d$, where
$p_1, \ldots, p_k$ are irreducible. Hence, $d = \deg(p_1)+\cdots+\deg(p_k)$.
We construct a polynomial $g = q_1 \cdots q_k$ with linear polynomials $q_i$ such
that $g$ has at least as many hyperbolicity cones as $f$.

By Lemma~\ref{lemma:hyperbolicity-cones-product-smooth-linear}, the number
of hyperbolicity cones of $f$ is at most twice the number of hyperbolicity
cones of $p_2 \cdots p_k$. 
Since the irreducible polynomial $p_1$ has at most two hyperbolicity cones, there exists
some hyperplane $H$ separating these two (open) convex cones. Set $q_1$ to be a linear
polynomial whose zero set is $H$. The set of hyperbolicity cones of $f$
injects to the set of hyperbolicity cones of $f^* = q_1 p_2 \cdots p_k$.
Repeating this process for $p_2, \ldots, p_k$ provides a polynomial $g=q_1 \cdots q_k$
whose number of hyperbolicity cones is at least the number of hyperbolicity
cones of $f$.

Hence, the number of hyperbolicity cones is maximized if $f$ is a product
of independent linear polynomials. Since replacing any nonlinear polynomial $p_i$ by a linear 
polynomial $q_i$ decreases the total degree of the overall product, 
the maximum number of hyperbolicity cones of a degree $d$ polynomial
cannot be attained if $f$ has a nonlinear irreducible factor $p_i$.

Now the stated numbers follow from 
Lemma~\ref{th:number-hyperbolicity-cones-linear-polynomials}.
\end{proof}

An illustration, where this number is attained, is given by Theorem \ref{thm:det-diag-imag-proj}.

For homogeneous polynomials $f$,
the uniqueness statement~Proposition~\ref{th:thm2} (up to sign) 
allows to characterize the boundary of $\mathcal{I}(f)^{\compl}$ -- or equivalently the boundary of the
hyperbolicity cones -- in terms of the variety $\mathcal{V}(f)$.

\begin{thm}\label{th:boundaryhomog}
	Let $f \in \C[\mathbf{z}]$ be homogeneous. Then
	\begin{enumerate}
		\item $\mathcal{V}_\R(f)\subseteq\mathcal{I}(f)$, with equality if and only if $e^{i\phi} f$ 
		is a product of real linear polynomials for some $\phi \in [0,2 \pi)$.
		\item If $f$ is hyperbolic and irreducible, then the Zariski closure of the boundary of $\mathcal{I}(f)^{\compl}$ equals $\mathcal{V}(f)$,
		\[
		\overline{\partial\mathcal{I}(f)^{\compl}}^Z=\mathcal{V}(f).
		\]
	\end{enumerate}
\end{thm}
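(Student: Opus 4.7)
My plan is to treat the two parts of Theorem~\ref{th:boundaryhomog} separately. Part~(1) combines an elementary homogeneity argument with a convex-geometric converse, while part~(2) rests on a dimension count together with the uniqueness of hyperbolicity cones (up to sign) from Proposition~\ref{th:thm2}.

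For part~(1), the inclusion $\mathcal{V}_\R(f)\subseteq\mathcal{I}(f)$ is immediate: if $f(\mathbf{y})=0$, then by homogeneity $f(0+i\mathbf{y})=i^{\deg f}f(\mathbf{y})=0$, so $\mathbf{y}\in\mathcal{I}(f)$. For the equivalence, replacing $f$ by $e^{i\phi}f$ changes neither $\mathcal{I}(f)$ nor $\mathcal{V}_\R(f)$, so we may assume $f\in\R[\mathbf{z}]$. If $f=\prod_{j=1}^d\ell_j$ with each $\ell_j$ a real linear form, then $\mathcal{I}(f)=\bigcup_j\mathcal{I}(\ell_j)=\bigcup_j\mathcal{V}_\R(\ell_j)=\mathcal{V}_\R(f)$ by Proposition~\ref{prop:linear-poly}. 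For the converse, assume $\mathcal{I}(f)=\mathcal{V}_\R(f)$. By Corollary~\ref{cor:hom-complement-two-cones}, $\R^n\setminus\mathcal{V}_\R(f)$ is a disjoint union of open convex cones paired as $\pm C$. The key step is to deduce that a real hypersurface whose complement splits in this way must be a central hyperplane arrangement: at any smooth $\mathbf{p}\in\mathcal{V}_\R(f)$, the hypersurface locally separates two adjacent open convex cones, so the tangent hyperplane at $\mathbf{p}$ supports both sides and therefore coincides locally with the hypersurface; analytic continuation along each irreducible real component then forces each such component to be a linear hyperplane through the origin. Matching $\mathcal{V}(f)$ with the complexified arrangement yields that $f$ is, up to a complex unit, a product of real linear forms.

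For part~(2), since $\mathcal{I}(f)$ is closed by Corollary~\ref{cor:hom-complement-two-cones}, we have $\partial\mathcal{I}(f)^{\compl}=\partial\mathcal{I}(f)\subseteq\mathcal{I}(f)$. For a hyperbolicity cone $C$ and $\mathbf{y}\in\partial C$, choosing $\mathbf{e}\in C$ and exploiting continuity of the real roots of $t\mapsto f(\mathbf{y}+t\mathbf{e})$ as $\mathbf{y}$ approaches the boundary of $C$ forces $0$ to occur as a root, so $f(\mathbf{y})=0$. Thus $\partial\mathcal{I}(f)^{\compl}\subseteq\mathcal{V}_\R(f)\subseteq\mathcal{V}(f)$, giving the inclusion $\overline{\partial\mathcal{I}(f)^{\compl}}^Z\subseteq\mathcal{V}(f)$. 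For the reverse inclusion, Proposition~\ref{th:thm2} guarantees that $\mathcal{I}(f)^{\compl}$ has exactly one pair $\pm C$ of components. Since $C$ is a full-dimensional open convex cone, $\partial C$ has real dimension $n-1$, so generic points of $\partial C$ are smooth points of $\mathcal{V}_\R(f)$, and in a neighborhood of such a point $\partial C$ contains a relatively open $(n-1)$-real-dimensional piece of $\mathcal{V}_\R(f)$. Near such a point, $\mathcal{V}_\R(f)$ is a totally real $(n-1)$-real-dimensional submanifold of the complex $(n-1)$-dimensional irreducible variety $\mathcal{V}(f)$, and the standard unique continuation principle for holomorphic functions along totally real submanifolds, together with the irreducibility of $\mathcal{V}(f)$, forces the complex Zariski closure of that open piece to equal all of $\mathcal{V}(f)$.

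The main obstacle is the converse direction of part~(1): rigorously showing that a central real algebraic hypersurface whose complement is a disjoint union of open convex cones must be a hyperplane arrangement, and then transferring this real-geometric conclusion to the complex factorization of $f$. The totally real unique continuation step in part~(2) is more routine but also requires care to ensure that the chosen smooth point of $\mathcal{V}_\R(f)$ actually lies on $\partial C$, which follows from the real-dimension count above.
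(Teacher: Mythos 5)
Your part (1) forward inclusion and your part (2) argument are essentially the paper's: the inclusion $\mathcal{V}_\R(f)\subseteq\mathcal{I}(f)$ uses the same $i\mathbf{z}$-trick, the containment $\partial C\subseteq\mathcal{V}_\R(f)$ is the standard fact about hyperbolicity cones that the paper cites to Renegar, and your dimension count plus irreducibility of $\mathcal{V}(f)$ is exactly how the paper concludes $\overline{\partial\mathcal{I}(f)^{\compl}}^Z=\mathcal{V}(f)$ (the appeal to ``unique continuation along totally real submanifolds'' is correct but heavier machinery than needed; the bare dimension count on Zariski closures already suffices). For the converse in part~(1), however, you take a genuinely different route from the paper, and that route has a gap. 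The paper reduces to irreducible $f$, observes $f$ must be hyperbolic (else $\mathcal{I}(f)=\R^n$ forces $f\equiv 0$), invokes Proposition~\ref{th:thm2} to get a single pair $\pm C$, writes $\mathcal{I}(f)=\overline{C}\cap\overline{-C}$ as an intersection of convex sets, and uses that a convex real algebraic set is an affine subspace. You instead argue locally at a smooth point of $\mathcal{V}_\R(f)$: that local tangency argument (the tangent hyperplane supports the two adjacent convex complement components on both sides, hence the hypersurface agrees with the hyperplane near $\mathbf{p}$, and analytic continuation flattens the whole irreducible real component) is sound and is an attractive alternative to the paper's convexity argument.

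The gap is the final step you call ``matching $\mathcal{V}(f)$ with the complexified arrangement.'' Knowing that $\mathcal{V}_\R(f)$ is a central hyperplane arrangement does \emph{not} by itself imply that $\mathcal{V}(f)$ is the complexification of that arrangement, nor that $f$ is, up to a unit, a product of real linear forms. Concretely, $f=z_1(z_1^2+\cdots+z_n^2)$ for $n\ge 3$ has $\mathcal{V}_\R(f)=\{z_1=0\}$, a single hyperplane, yet $f$ is not a product of linear forms. What rules such extra factors out is not the shape of the real variety but the hypothesis $\mathcal{I}(f)=\mathcal{V}_\R(f)$, used \emph{again}: an irreducible factor $p$ of $f$ with $\dim\mathcal{V}_\R(p)<n-1$ would have $\mathcal{I}(p)$ either equal to $\R^n$ (if $p$ is not hyperbolic) or full-dimensional (since $\mathcal{I}(p)^{\compl}$ is, by Proposition~\ref{th:thm2}, a single pair $\pm C$ of open convex cones, and $\R^n\setminus(C\cup -C)$ has measure zero only when $\overline{C}$ is a half-space, which forces $p$ linear). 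Either way, $\mathcal{I}(p)\subseteq\mathcal{I}(f)=\mathcal{V}_\R(f)$ would be violated. This is precisely the work the paper does by first reducing to irreducible $f$ and using Proposition~\ref{th:thm2}; in your write-up it is replaced by a one-sentence assertion, which is the place where the argument is incomplete.
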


\begin{proof}
By homogeneity, if $\mathbf{z}$ is a root of $f$, then $i\mathbf{z}$ is a root of $f$ as well. Hence, if $\mathbf{x}\in\mathcal{V}_\R(f)$, then $\mathbf{x}\in\mathcal{I}(f)$.

	Let $e^{i\phi}f$ be a linear polynomial with real coefficients. 
	By Proposition~\ref{prop:linear-poly}, the imaginary projection of $e^{i \phi}f$ and thus of
	$f$ is exactly $\mathcal{V}_\R(f)$ (notice that $\mathcal{I}(f)\neq\R^n$). Hence, the statement holds for products of linear polynomials as well.

	For the converse direction, let $\mathcal{I}(f)=\mathcal{V}_\R(f)$. Assume first that $f$ is irreducible. We observe that $f$ must be hyperbolic, since otherwise $\mathcal{I}(f)=\R^n$, which would imply $f\equiv0$.  By Proposition \ref{th:thm2}, $f$ has exactly one pair of hyperbolicity cones. It corresponds to the two convex, open components $C$ and $-C$ of $\mathcal{I}(f)^{\compl}$. By assumption, $\mathcal{I}(f)$ is a real algebraic set, and hence
	$\mathcal{I}(f)=\partial\mathcal{I}(f) = \partial C = \partial (-C)$.
	Thus, $\mathcal{I}(f)=\overline{C}\cap\overline{-C}$ is a convex set, where $\overline{C}$ denotes the topological closure of $C$. 
	Since for any two points $\mathbf{a}, \mathbf{b}\in\mathcal{I}(f)$ with $\mathbf{a}\neq \mathbf{b}$ their convex combination is contained in $\mathcal{I}(f) = \mathcal{V}_\R(f)$, and hence the underlying polynomial must be linear.
	Due to $\mathcal{I}(f) \neq \R^n$, the classification of linear polynomials in 
	\cite{TTT} (cf.\ Prop.~\ref{prop:linear-poly} here) provides that
	$f$ is of the form $e^{i \phi} f$.
	
	If $f$ is a product of non-constant irreducible polynomials, we can consider the imaginary projection of each factor and obtain the overall statement.

For the second statement, let $f$ be hyperbolic with respect to $\mathbf{e}$
and irreducible. By Theorem \ref{thm:connection-complement-hyperbolicity-cone},
the hyperbolicity cone $C = C(\mathbf{e})$ is a component of 
$\mathcal{I}(f)^{\compl}$. 
Since $C$ is the connected component
in the complement of $\mathcal{V}(f)$ containing $\mathbf{e}$ (see \cite{renegar-2006}), it is bounded by some subset of its real variety. And since $f$ is irreducible, the Zariski closure of $\partial\mathcal{I}(f)^{\compl}$ is $\mathcal{V}(f)$.
\end{proof}

\section{Non-homogeneous polynomials and their homogenization\label{se:nonhomogeneous}}

In this section, we deal with the complement components for non-homogeneous polynomials
as well as with homogenization. For $f \in \C[\mathbf{z}]$, we show that there is a bijection between 
the set of unbounded components of $\mathcal{I}(f)^{\compl}$ with full-dimensional
recession cone and the hyperbolicity cones of the initial form of $f$ (as defined below). Then we show Theorem~\ref{th:strict-convex-nonhomog}.

Denote by $f_h=f_h(z_0,\mathbf{z})$ the homogenization of $f$ with respect to the variable $z_0$.
For a set $X \subseteq \R^n$ let $\cone X = \{ \lambda \mathbf{x} \in \R^n \, : \, \mathbf{x} \in X, \lambda \ge 0\}$ denote the \emph{cone over} $X$.
The following statement captures the connection between the imaginary projection of $f$
and the imaginary projection of its homogenization.

\begin{thm} \label{thm:relation-imag-homog}
If $f\in\C[\mathbf{z}]$ then
	$
	\mathcal{I}(f_h)\cap\{(y_0,\mathbf{y}) \in \R^{n+1} \,: \, y_0=0\} 
\ = \ \{0\} \times ( \cone\mathcal{I}(f) \cup
\mathcal{I}(f_h(0,\mathbf{z}))).
	$
\end{thm}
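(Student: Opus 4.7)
The plan is to verify both inclusions by a case distinction on whether the first coordinate $x_0 := z_0$ of a root $(z_0, \mathbf{z}) \in \mathcal{V}(f_h)$ with $\Im z_0 = 0$ is zero or non-zero. The key tool throughout is the homogeneity of $f_h$, which for every $\lambda \in \C$ equates the condition $(z_0, \mathbf{z}) \in \mathcal{V}(f_h)$ with $(\lambda z_0, \lambda \mathbf{z}) \in \mathcal{V}(f_h)$.

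For the inclusion $\supseteq$, I would treat the two summands on the right-hand side separately. If $\mathbf{y} \in \mathcal{I}(f_h(0, \mathbf{z}))$, pick $\mathbf{z}$ with $f_h(0, \mathbf{z}) = 0$ and $\Im \mathbf{z} = \mathbf{y}$; then $(0, \mathbf{z}) \in \mathcal{V}(f_h)$ witnesses $(0, \mathbf{y}) \in \mathcal{I}(f_h)$. If $\mathbf{y} = \lambda \Im \mathbf{w}$ for some $\lambda \geq 0$ and $\mathbf{w} \in \mathcal{V}(f)$, homogeneity yields $(\lambda, \lambda \mathbf{w}) \in \mathcal{V}(f_h)$, whose imaginary part is exactly $(0, \mathbf{y})$, so this point lies in the left-hand side.

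For the inclusion $\subseteq$, consider $(z_0, \mathbf{z}) \in \mathcal{V}(f_h)$ with $z_0 = x_0 \in \R$. If $x_0 = 0$, then by definition $\mathbf{z} \in \mathcal{V}(f_h(0, \cdot))$, placing $\Im \mathbf{z}$ into $\mathcal{I}(f_h(0, \mathbf{z}))$. Otherwise, scaling by $\lambda = 1/x_0$ via homogeneity gives $\mathbf{w} := \mathbf{z}/x_0 \in \mathcal{V}(f)$, and hence $\Im \mathbf{z} = x_0 \Im \mathbf{w}$ belongs to $\cone \mathcal{I}(f)$. The only step requiring care is the subcase $x_0 < 0$, since by definition $\cone$ permits only non-negative scalars: here one would appeal to the negation symmetry of $\mathcal{I}(f)$, which for real polynomials follows from $\overline{\mathbf{w}} \in \mathcal{V}(f)$ together with $\Im \overline{\mathbf{w}} = -\Im \mathbf{w}$, rewriting $x_0 \Im \mathbf{w} = |x_0|\, \Im \overline{\mathbf{w}} \in \cone \mathcal{I}(f)$. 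This sign bookkeeping is the main (and essentially only) subtlety; the rest is a mechanical tracking of roots under the scaling $(z_0, \mathbf{z}) \mapsto (\lambda z_0, \lambda \mathbf{z})$.
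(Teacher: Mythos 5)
Your proof follows the paper's line of argument closely: both rest on the homogeneity of $f_h$ to pass between a root $(z_0,\mathbf{z})$ with $\Im z_0=0$ and its rescaling $(1,\mathbf{z}/z_0)$, splitting on whether $z_0=0$ or not. The one substantive place you diverge is that you noticed the sign subtlety in the $\subseteq$ direction: when $x_0<0$ the conclusion $x_0\,\Im\mathbf{w}\in\cone\mathcal{I}(f)$ does not follow directly, since $\cone$ as defined in this paper uses only nonnegative scalars. The paper's own proof has exactly this gap --- it concludes $\mathbf{y}\in\cone\mathcal{I}(f)$ from $\tfrac{1}{c}(\mathbf{x}+i\mathbf{y})\in\mathcal{V}(f)$ without inspecting the sign of $c$.

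Your repair via complex conjugation is correct, but as you flag yourself, it applies only when $f$ has real coefficients. This is not a defect of your reasoning; it exposes that the theorem is in fact false as stated for general $f\in\C[\mathbf{z}]$. Take $n=1$ and $f(z)=z-i$. Then $\mathcal{V}(f)=\{i\}$, so $\mathcal{I}(f)=\{1\}$ and $\cone\mathcal{I}(f)=[0,\infty)$; moreover $f_h(0,z)=z$ gives $\mathcal{I}(f_h(0,z))=\{0\}$, so the right-hand side of the identity is $\{0\}\times[0,\infty)$. On the other hand $(z_0,z)=(-1,-i)\in\mathcal{V}(f_h)$ has $\Im(z_0,z)=(0,-1)$, so $(0,-1)$ lies in the left-hand side $\mathcal{I}(f_h)\cap\{y_0=0\}$ but not in the right-hand side. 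The statement becomes correct either after restricting to $f\in\R[\mathbf{z}]$ (where $\mathcal{I}(f)=-\mathcal{I}(f)$ by conjugate symmetry, exactly the observation you use) or after replacing $\cone X$ by the two-sided cone $\{\lambda\mathbf{x}:\mathbf{x}\in X,\ \lambda\in\R\}$. Under either repair your argument goes through; under the theorem's stated hypotheses, neither your proof nor the paper's can close the $x_0<0$ case, because the claim itself fails there.
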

\begin{proof}
	If $\mathbf{y}$ is a non-zero point in $\cone \mathcal{I}(f)$, we have $\lambda\mathbf{y}\in\mathcal{I}(f)$ for some 
$\lambda \ge 0 $. Hence, there exists an $\mathbf{x} \in \R^n$ with
$f_h((1,\mathbf{x}+i\mathbf{\lambda y})) = 0$. 
By homogeneity of $f_h$, this also gives
$(0, \mathbf{y}) \in \mathcal{I}(f_h) \cap \{(y_0,\mathbf{y}) \in \R^{n+1} \, : \, y_0 = 0\}$.

Conversely, if $(0,\mathbf{y})$ is a non-zero point in 
$\mathcal{I}(f_h)\cap\{(y_0,\mathbf{y}) \in \R^{n+1} \, : \, y_0=0\}$
 and $y \not\in \mathcal{I}(f_h(0,\mathbf{z}))$,
 then there exists some 
$\mathbf{x} \in \R^n$ and some $c \in \R \setminus \{0\}$ such that
$f_h((c,\mathbf{x} + i\mathbf{y})) = 0$. Hence, $\frac{1}{c}(\mathbf{x}+i\mathbf{y})$
is a zero of $f$, and therefore $\mathbf{y} \in \cone \mathcal{I}(f)$.
\end{proof}

By Theorem~\ref{thm:relation-imag-homog},
bounded components in the complement vanish under homogenization, and 
only conic components with apex at the origin remain.
Concerning dehomogenization, note that the intersection of the imaginary projection
of a homogeneous polynomial $f \in \C[z_0,\mathbf{z}] = \C[z_0, \ldots, z_n]$ with a fixed hyperplane  
$\{(y_0,\mathbf{y}) \in\R^{n+1} \, : \, y_0=\beta\}$, $\beta\neq0$ is
\[
	\mathcal{I}(f_h)\cap\{(y_0,\mathbf{y})\in\R^{n+1} \, : \, y_0=\beta\}=\bigcup_{\alpha\in\R}\mathcal{I}(f_h(\alpha+i\beta,\mathbf{z})).
\]

We denote by $\init(f)$ the \emph{initial form} of $f$, i.e., the sum of all those terms which have maximal total degree. Note that
$\init(f)(\mathbf{z})=f_h(0,\mathbf{z})$. 

Recall that the \emph{recession cone} of a convex set $A \subseteq \R^n$ is
$\rec(A) \, = \, \{ \, \mathbf{a} \in A \, : \, \mathbf{a} + \mathbf{x} \in A \text{ for all $\mathbf{x}$ in $A$}\}$ 
(see, e.g., \cite{rockafellar-book}). Whenever $A$ is closed then $\rec(A)$
is closed. For a polynomial $f$, 
denoting by $\overline{\mathcal{I}(f)}$ the closure of $\mathcal{I}(f)$,
we can characterize the components of
$(\overline{\mathcal{I}(f)})^{\compl}$ with full-dimensional recession cones
in terms of the hyperbolicity cones of $\init(f)$.

\begin{thm}\label{thm:unbounded-complement-components-dim-n}
  For $f \in \C[\mathbf{z}]$,
  there is a bijection between the set of unbounded components of 
  $\mathcal{I}(f)^{\compl}$ with full-dimensional recession cone and the hyperbolicity cones of $\init(f)$.
\end{thm}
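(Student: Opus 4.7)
My candidate for the bijection is the map
\[
  C\ \longmapsto\ D(C)\ :=\ \inter(\rec(\overline{C})),
\]
assigning to each unbounded component of $\mathcal{I}(f)^{\compl}$ with full-dimensional recession cone the interior of its recession cone in $\R^n$. The plan is to show that $D(C)$ is always a hyperbolicity cone of $\init(f)$ and that every hyperbolicity cone arises uniquely this way. The bridge between the two sides is the identification $\mathcal{I}_\infty(f) = \mathcal{I}(\init(f))\cap\Sph^{n-1}$, which I derive from Theorem~\ref{thm:relation-imag-homog}: a limit direction $\mathbf{v}$ of $\mathcal{I}(f)$ yields $(0,\mathbf{v})\in\mathcal{I}(f_h)\cap\{y_0=0\}$ by the cone property of $\mathcal{I}(f_h)$, and conversely each point $\xi\in\mathcal{V}(\init(f))$ with $\Im(\xi)\neq 0$ is approximated by affine zeros of $f$ via the projective closure of $\mathcal{V}(f)$ meeting $\{z_0=0\}$ in $\mathcal{V}(\init(f))$.

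For the forward direction, fix $\mathbf{e}\in D(C)$. Since $\mathbf{e}$ lies in the open interior of $\rec(\overline{C})$, any sequence $\mathbf{y}_k\in\mathcal{I}(f)$ with $|\mathbf{y}_k|\to\infty$ and $\mathbf{y}_k/|\mathbf{y}_k|\to\mathbf{e}/|\mathbf{e}|$ would satisfy $(\mathbf{y}_k-\mathbf{c}_0)/|\mathbf{y}_k-\mathbf{c}_0|\in\inter(\rec(\overline{C}))$ for $k$ large (where $\mathbf{c}_0\in C$ is fixed), forcing $\mathbf{y}_k=\mathbf{c}_0+|\mathbf{y}_k-\mathbf{c}_0|\cdot(\mathbf{y}_k-\mathbf{c}_0)/|\mathbf{y}_k-\mathbf{c}_0|\in C$, contradicting $\mathbf{y}_k\in\mathcal{I}(f)$. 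Hence $\mathbf{e}/|\mathbf{e}|\notin\mathcal{I}_\infty(f)$, so by the identification and the conicality of $\mathcal{I}(\init(f))$ we obtain $\mathbf{e}\notin\mathcal{I}(\init(f))$. Theorem~\ref{th:boundaryhomog}(1) gives $\mathcal{V}_\R(\init(f))\subseteq\mathcal{I}(\init(f))$, so also $\init(f)(\mathbf{e})\neq 0$, making $\mathbf{e}$ a hyperbolicity direction of $\init(f)$. Since $D(C)$ is an open connected convex cone contained in $\mathcal{I}(\init(f))^{\compl}$, whose components are the hyperbolicity cones by Theorem~\ref{thm:connection-complement-hyperbolicity-cone}, $D(C)$ sits inside a unique hyperbolicity cone $D$.

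For the inverse, given a hyperbolicity cone $D$ and $\mathbf{e}\in D$ we have $\mathbf{e}\notin\mathcal{I}(\init(f))$, hence $\mathbf{e}/|\mathbf{e}|\notin\mathcal{I}_\infty(f)$. Closedness of $\mathcal{I}_\infty(f)$ in $\Sph^{n-1}$ yields an open conic neighborhood $U\subseteq D$ of $\mathbf{e}$ and a radius $R>0$ such that the truncated cone $\{\mathbf{y}:|\mathbf{y}|>R,\,\mathbf{y}/|\mathbf{y}|\in U\}$ is disjoint from $\mathcal{I}(f)$. This truncated cone is path-connected and therefore lies in some unbounded component $C(D)$ of $\mathcal{I}(f)^{\compl}$ with $U\subseteq\inter(\rec(\overline{C(D)}))$. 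Because $D$ is connected, varying $\mathbf{e}$ over $D$ covers $D$ by such neighborhoods that all lie in the same component, giving $D\subseteq D(C(D))$; combined with the forward direction's inclusion $D(C(D))\subseteq D$, the two maps are mutual inverses. The main obstacle is establishing the reverse inclusion $\mathcal{I}_\infty(f)\supseteq\mathcal{I}(\init(f))\cap\Sph^{n-1}$: producing, for each direction at infinity of $\init(f)$, a family of affine zeros of $f$ approaching it. This requires care at singular points of $\mathcal{V}(\init(f))$ where the naive continuity approach may degenerate, and relies on the geometry of the projective closure of the affine variety.
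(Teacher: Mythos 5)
Your proposal follows essentially the same route as the paper's. The paper proves the theorem by factoring through a three-part Lemma~\ref{lemma:limit-directions-LT-f}: (1) $\mathcal{I}_\infty(f) = \mathcal{I}_\infty(\init(f))$, which for the homogeneous $\init(f)$ is precisely your identification $\mathcal{I}_\infty(f) = \mathcal{I}(\init(f))\cap\Sph^{n-1}$; (2) for an unbounded component $C$ with recession cone $C'$, $\inter C'$ is a component of $\mathcal{I}(\init(f))^{\compl}$ iff $\dim C' = n$, proved exactly by your argument that interior recession directions cannot be limit directions; (3) each component $C'$ of $\mathcal{I}(\init(f))^{\compl}$ embeds as $\mathbf{y}_0 + C'$ into some component of $\mathcal{I}(f)^{\compl}$ whose recession cone has interior $C'$, which is your truncated-cone construction plus a uniqueness step. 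So the bijection map, the truncated-cone inverse, and the limit-direction bridge are all the same.

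The gap you honestly flag --- the inclusion $\mathcal{I}(\init(f))\cap\Sph^{n-1}\subseteq\mathcal{I}_\infty(f)$ --- is exactly what the paper's Lemma~\ref{lemma:limit-directions-LT-f}(1) asserts. The paper's proof of that part is itself quite terse: it observes that $f_h$ has a zero at infinity $(0:\mathbf{z})$ if and only if $\init(f)(\mathbf{z}) = 0$, and then says ``hence the limit directions of $f$ and $\init(f)$ coincide.'' This step implicitly invokes the machinery from \cite{TTT} relating limit directions of $\mathcal{I}(f)$ to imaginary parts of the projective zeros of $f_h$ on the hyperplane $\{z_0 = 0\}$ (the bivariate version is the paper's Proposition~\ref{pr:limitdirections}, cited from \cite[Cor.\ 6.7]{TTT}). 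Your concern about singular points of $\mathcal{V}(\init(f))$ and the behavior of the projective closure is legitimate but is absorbed into that citation rather than resolved anew. One small additional point: to close the bijection you should also check $C(D(C)) = C$; this follows by running your truncated-cone construction at a direction $\mathbf{e}$ chosen inside $D(C)$ itself, so that the truncated cone eventually lies in $\mathbf{c}_0 + \inter\rec(\overline{C}) \subseteq C$, forcing $C(D) = C$.
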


\begin{figure}
	\centering
	\includegraphics[width=0.3\linewidth]{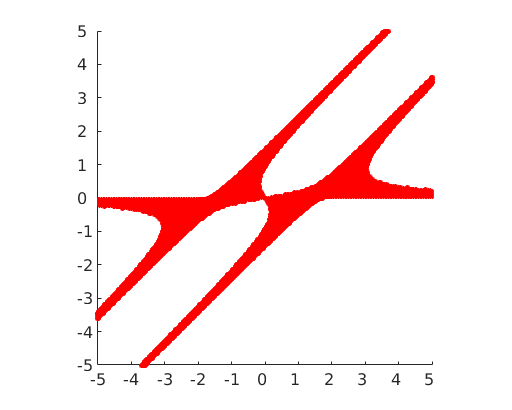}
	\caption{$f(z_1,z_2)=z_1^3-2z_1^2z_2+z_1z_2^2+z_1+z_2+1$. We have
         $\init(f) = z_1 (z_1-z_2)^2$, and any boundary point of the 
         complement of $\mathcal{I}(\init(f))$ satisfies $z_1=0$ or $z_1=z_2$. 
         Altogether, $\mathcal{I}(f)^{\compl}$ has six components.}
	\label{fig:ex-convexity-neu}
\end{figure}

Hence, there are at least as many unbounded components in $\mathcal{I}(f)^{\compl}$ as components in $\mathcal{I}(\init(f))^{\compl}$. Moreover, if $\init(f)$ is hyperbolic, 
$\mathcal{I}(f)^{\compl}$ has at least two (full-dimensional) components.
Note that for a polynomial $f$, the terms of lower degree can cause some unbounded 
components in the complement that have lower-dimensional recession cones.
See Figure \ref{fig:ex-convexity-neu} for an example.

In order to prove the theorem, we show the following lemma, where $\inter$ denotes the
interior of a set.

\begin{lemma}\label{lemma:limit-directions-LT-f}
For $f \in \C[\mathbf{z}]$, the following statements hold.
\begin{enumerate}
\item The sets of limit directions $\mathcal{I}_\infty(f)$ and $\mathcal{I}_\infty(\init(f))$ coincide.
\item If $C$ is an unbounded component of $(\overline{\mathcal{I}(f)})^{\compl}$ with recession cone 
$C'$, then $\inter C'$ is a component of $\mathcal{I}(\init(f))^{\compl}$ if and only if 
$\dim C' = n$.
\item If $C$ is a component of $\mathcal{I}(\init(f))^{\compl}$, then there is a $\mathbf{y}_0\in\R^n$ such that $\mathbf{y}_0+C$ lies in a component of $\mathcal{I}(f)^{\compl}$ and $C$ equals the interior of the
recession cone of that complement component.
\end{enumerate}
\end{lemma}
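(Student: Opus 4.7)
The overarching plan is to exploit the rescaling principle $r^{-d} f(r\mathbf{z}) \to \init(f)(\mathbf{z})$ (uniformly on compacta as $r \to \infty$, where $d = \deg f$), which couples the geometry of $\mathcal{V}(f)$ far from the origin to that of $\mathcal{V}(\init(f))$. All three parts reduce to this principle combined with semi-algebraic tools---curve selection and Puiseux expansion---applied to the set $\{(\mathbf{x},\mathbf{y}) \in \R^{2n} : f(\mathbf{x}+i\mathbf{y}) = 0\}$, whose projection onto the $\mathbf{y}$-coordinate is $\mathcal{I}(f)$.

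For (1), to show $\mathcal{I}_\infty(\init(f)) \subseteq \mathcal{I}_\infty(f)$, I would fix $\mathbf{u} \in \mathcal{I}(\init(f)) \cap \Sph^{n-1}$ together with a (generically smooth) zero $\mathbf{x}_0 + i\mathbf{u}$ of $\init(f)$ and apply the implicit function theorem to the family $r^{-d} f(r\mathbf{z})$ to extract zeros $\mathbf{z}_r = \mathbf{x}_r + i\mathbf{y}_r$ with $\mathbf{z}_r \to \mathbf{x}_0 + i\mathbf{u}$; then $r\mathbf{y}_r \in \mathcal{I}(f)$ has limit direction $\mathbf{u}$, and closedness of $\mathcal{I}_\infty(f)$ handles non-smooth $\mathbf{u}$ by approximation. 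Conversely, given $\mathbf{y}_k \in \mathcal{I}(f)$ with $\|\mathbf{y}_k\| \to \infty$ and $\mathbf{y}_k/\|\mathbf{y}_k\| \to \mathbf{u}$, and $\mathbf{x}_k$ with $f(\mathbf{x}_k+i\mathbf{y}_k)=0$, curve selection yields a semi-algebraic arc $(\mathbf{x}(s),\mathbf{y}(s))$ along which the Puiseux expansions of $\mathbf{x}(s)$ and $\mathbf{y}(s)$ determine a dominant growth rate $\rho(s)$; dividing $f(\mathbf{x}(s)+i\mathbf{y}(s)) = 0$ by $\rho(s)^d$ and passing to the limit produces a zero of $\init(f)$ whose imaginary part is proportional to $\mathbf{u}$, giving $\mathbf{u} \in \mathcal{I}(\init(f)) = \mathcal{I}_\infty(\init(f))$.

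For (2), if $\dim C' < n$ then $\inter C' = \emptyset$ and the statement is vacuous. When $\dim C' = n$, fix $\mathbf{y}_0 \in C$: any $\mathbf{u} \in \inter C'$ gives a ray $\mathbf{y}_0 + t\mathbf{u} \subseteq C \subseteq (\overline{\mathcal{I}(f)})^\compl$ for all $t \ge 0$, so by (1) the direction $\mathbf{u}$ cannot lie in $\mathcal{I}_\infty(f) = \mathcal{I}(\init(f)) \cap \Sph^{n-1}$; thus $\inter C' \subseteq \mathcal{I}(\init(f))^\compl$. Being open, connected, and conic, $\inter C'$ lies in a single component of $\mathcal{I}(\init(f))^\compl$, and the reverse inclusion is supplied by (3), which shows that every component of $\mathcal{I}(\init(f))^\compl$ arises this way, yielding the bijection.

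For (3), $C$ is an open convex cone by Theorem~\ref{thm:connection-complement-hyperbolicity-cone} and Corollary~\ref{cor:hom-complement-two-cones}. I would choose $\mathbf{y}_0 \in C$ with $\dist(\mathbf{y}_0, \partial C) = D$ large. Since the ball of radius $D$ around $\mathbf{y}_0$ lies in $C$ and $C + C \subseteq C$, every point of $\mathbf{y}_0 + C$ stays at distance at least $D$ from $\partial C \subseteq \mathcal{I}(\init(f))$. Strengthening (1) to the metric statement that $\mathcal{I}(f)$ outside a bounded set sits in a tubular neighborhood of $\mathcal{I}(\init(f))$ of bounded radius---the subleading terms of $f$ cause only bounded transverse deviations from $\mathcal{V}(\init(f))$, as seen through Puiseux parametrizations of the branches of $\mathcal{V}(f)$ at infinity---one can force $D$ to exceed this radius, making $\mathbf{y}_0 + C$ disjoint from $\mathcal{I}(f)$. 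Being connected, $\mathbf{y}_0 + C$ then lies in a single component $U$ of $\mathcal{I}(f)^\compl$; its recession cone contains $\overline{C}$, and by (1) no direction outside $\overline{C}$ can be a recession direction of $U$, so $\inter \rec U = C$. The hard part will be precisely this quantitative upgrade of (1)---part (1) only gives angular convergence at infinity, whereas (3) needs bounded metric closeness---which requires a careful analysis of Puiseux exponents on all branches of $\mathcal{V}(f)$ at infinity.
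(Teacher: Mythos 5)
Your architecture differs from the paper's mainly in part (1): the paper deduces (1) in two lines from the correspondence between limit directions of $\mathcal{I}(f)$ and zeros at infinity of the homogenization $f_h$ (i.e.\ zeros of $\init(f)=f_h(0,\mathbf{z})$), whereas you argue analytically via the rescaling $r^{-d}f(r\mathbf{z})\to\init(f)(\mathbf{z})$ plus curve selection and Puiseux expansions; parts (2) and (3) follow the paper's outline. There is, however, a genuine gap in your converse direction of (1), precisely at the step ``dividing by $\rho(s)^d$ and passing to the limit produces a zero of $\init(f)$ whose imaginary part is proportional to $\mathbf{u}$.'' The coordinates of $\mathbf{x}(s)+i\mathbf{y}(s)$ may grow at different Puiseux rates, so after normalizing by the dominant rate the imaginary part can tend to $\mathbf{0}$; a zero of $\init(f)$ with zero imaginary part gives no information about $\mathbf{u}$. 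This failure cannot be repaired in general: for $f=z_2^2-z_1$ one has $\mathcal{V}(f)=\{(z^2,z):z\in\C\}$, hence $\mathcal{I}(f)\supseteq\R\times(\R\setminus\{0\})$ and $\mathcal{I}_\infty(f)=\Sph^1$, while $\init(f)=z_2^2$ gives $\mathcal{I}_\infty(\init(f))=\{\pm(1,0)\}$. (The paper's own proof of (1) makes the same unjustified jump in its ``Hence''; only your forward inclusion $\mathcal{I}_\infty(\init(f))\subseteq\mathcal{I}_\infty(f)$ is sound, and even there you should replace the implicit function theorem by continuity of roots on a generic complex line, since $\init(f)$ may have multiple factors and then admits no smooth zeros on such a component.)

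For (3) you correctly isolate the needed metric strengthening of (1) --- that outside a compact set $\mathcal{I}(f)$ lies in a bounded-radius neighborhood of the cone $\mathcal{I}(\init(f))$ --- but you do not prove it, so the proposal is incomplete exactly at its acknowledged ``hard part''; the same example shows this estimate can genuinely fail, since $\mathcal{I}(z_2^2-z_1)$ is not contained in any bounded neighborhood of $\{y_2=0\}$. To be fair, the paper buries the identical difficulty in the single word ``Hence'' when producing $\mathbf{y}_0$, and your treatment of (2) is more careful than the paper's (which only establishes $\inter C'\subseteq\mathcal{I}(\init(f))^{\compl}$ and asserts maximality), but closing (2) via (3) inherits the unproved estimate. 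In short: the approach is a legitimate, more self-contained alternative to the paper's homogenization argument, but the converse half of (1) and the quantitative input to (3) are real gaps, not technicalities.
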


\begin{proof}
(1)
The homogenization $f_h$ has a zero at infinity, i.e. $(0,z_1,\ldots,z_n)\subseteq\mathcal{V}(f_h)$, if and only if $\init(f)=f_h(0,z_1,\ldots,z_n)=0$. 
Hence, the limit directions of $f$ and $\init(f)$ coincide.

(2) Since hyperbolicity cones are open, $\mathcal{I}(\init(f))$ is closed and thus
    $\mathcal{I}_\infty(f)=\mathcal{I}_\infty(\init (f))$ is closed as well.

Let $C$ be an unbounded component of $(\overline{\mathcal{I}(f)})^{\compl}$ with recession cone $C'$.
If $\dim C' < n$, then $\inter C' = \emptyset$, hence $C'$ is not a hyperbolicity cone of the 
homogeneous polynomial $\init(f)$.
Conversely, if $\dim C = n$ then let $\mathbf{y}_0 \in \R^n$ with $\mathbf{y}_0 + C' \subseteq C$.
For all $r>0$ we have
\[
\frac{1}{r}\big((\mathbf{y}_0+C')\cap\mathcal{I}(f)\big)\cap\Sph^{n-1}=\emptyset.
\]
Under taking the limit $r\rightarrow\infty$, we obtain that no interior point of the
set of limit points
\begin{equation}\label{eq:limit-direction-underdimensional-complement-components}
\lim_{r\rightarrow\infty}\frac{1}{r}(\mathbf{y}_0+C')\cap\Sph^{n-1}
\end{equation}
is a limit direction of $\mathcal{I}(f)$. 
By (1), these interior points are not limit directions of $\mathcal{I}(\init (f))$ either.
As a consequence, $\inter C'$ is a component
of $\mathcal{I}(\init(f))^{\compl}$.

(3) Let $C'$ be a component of $\mathcal{I}(\init (f))^{\compl}$. Set $U=C'\cap\Sph^{n-1}$ and note that the positive hull $\pos U$ satisfies 
$\pos U=C'$. Since $\mathcal{I}(\init (f))$ is a cone, we have $U\subseteq\mathcal{I}_\infty(\init (f))
= \mathcal{I}_\infty(f)$. Hence, there is a $\mathbf{y}_0\in\mathcal{I}(f)^{\compl}$ such that $\mathbf{y}_0+\pos U$ is contained in a component $\mathcal{I}(f)^{\compl}$. 

Denote by $C''$ the recession cone of the component of $\mathcal{I}(f)^{\compl}$ that contains $\mathbf{y}_0+C'$. Clearly, $C'\subseteq C''$. Using (2), it follows that $\inter C''= C'$.
\end{proof}

Theorem~\ref{thm:unbounded-complement-components-dim-n} is a consequence of Lemma \ref{lemma:limit-directions-LT-f}.

\medskip

\noindent
\emph{Proof of Theorem~\ref{thm:unbounded-complement-components-dim-n}.}
  If the recession cone $C'$ of $C$ is full-dimensional, then, by
Lemma \ref{lemma:limit-directions-LT-f}~(2), $\inter C'$ is a
component of $\mathcal{I}(\init(f))^{\compl}$,
i.e., $\inter C'$ is a hyperbolicity cone of $\init(f)$.

Conversely, if the recession cone $C'$ of $C$ is a hyperbolicity cone of $\init(f)$,
then, by Lemma~\ref{lemma:limit-directions-LT-f}~(3), it is open
and thus full-dimensional.
\hfill $\Box$

\medskip

We now show Theorem~\ref{th:strict-convex-nonhomog}.
For $\varphi \in \R$, denote by
$R^{\varphi} : \R^2 \to \R^2$ the linear mapping rotating a given point $\mathbf{x} \in \R^2$ 
by an angle $\varphi$ around the origin. $R^{\varphi}$ has a real representation matrix and
can also be viewed as a linear mapping $\C^2 \to \C^2$.

\begin{proof}[Proof of Theorem \ref{th:strict-convex-nonhomog}]
      Given $K \in \N$, we construct a polynomial $p_{K,n}$ in $n$ variables with at least
	$K$ strictly convex complement components. For the case $n=2$, let
	\[
	g(z_1,z_2) \ = \ (-z_1^2+z_2^2-1)(z_1^2-z_2^2-1) \, ,
	\]
      and
	\begin{equation}\label{eq:poylnomial-strictly-convex-comp-comp}
	p_{K,2}(\mathbf{z}) \ = \ \left(z_1^2+ z_2^2+r^2\right)\cdot \prod_{j=0}^{m-1}g(R_1^{2\pi j/m}(z_1,z_2),R_2^{2\pi j/m}(z_1,z_2))
	\end{equation}
	where $m=\left\lceil\frac{K}{4}\right\rceil$ and $r>0$ sufficiently large. By \cite[Thm. 5.3]{TTT}, 
$\mathcal{I}(z_1^2+z_2^2+r^2)^{\compl}$ is the open disk with radius $r$ centered at the origin, and the boundaries of the two-dimensional
components of $\mathcal{I}(g)^{\compl}$ are given by four hyperbolas.
Since the convex components of $\mathcal{I}(g)^{\compl}$ and of 
$\mathcal{I}(z_1^2+z_2^2+r^2)^{\compl}$ are strictly convex, the components
of $\mathcal{I}(p_{K,2})^{\compl}$ are strictly convex. Figure~\ref{im:p_2} depicts $\mathcal{I}(p_{4,2})$.
	
	\ifpictures
	\begin{figure}[ht]
		\[
		\includegraphics[height=0.25\linewidth]{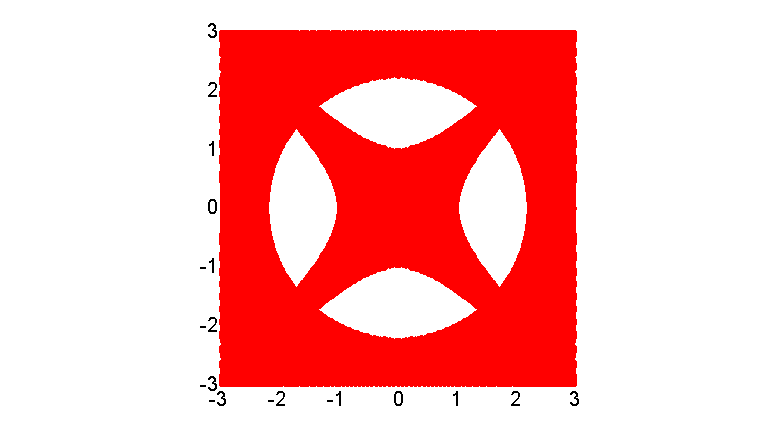}
		\]
		\caption{The imaginary projections of $p_{4,2}$.}
		\label{im:p_2}
	\end{figure}
	\fi
	
	The expressions $(R^{2\pi j/m}_1(z_1,z_2),R^{2\pi j/m}_2(z_1,z_2))$ in the arguments of $g$ provide a rotation of its imaginary projection by an angle of $-2\pi j/m$. Choosing $r$ large enough guarantees that the complement component of $\mathcal{I}(z_1^2+z_2^2+r^2)$ is not completely covered by the imaginary projections of the $g(R_1^{2\pi j/m}(z_1,z_2),R_2^{2\pi j/m}(z_1,z_2))$.
Altogether, $\mathcal{I}(p_{K,2})^{\compl}$ has $4m\geq K$ bounded and strictly convex,
two-dimensional components.
	
	Note that the asymptotes of the hyperbolas do not belong to the imaginary projection of $g$, except the origin. Therefore, $\mathcal{I}(p_{K,2})^{\compl}$ has in total $8m$ bounded components.

The case $n \ge 3$ follows by a suitable modification 
of \eqref{eq:poylnomial-strictly-convex-comp-comp}. Namely, set
      \[
        g(\mathbf{z}) \ = \ (r z_1)^2 - \big( \sum_{j=2}^n z_j^2 \big) + 1 \ = \ r^2 z_1^2 - \big( \sum_{j=2}^n z_j^2 \big) + 1
      \]
      and
      \[
	p_{K,n}(\mathbf{z}) \ = \left(\sum_{j=1}^n z_j^2+1\right) \cdot 
       \prod_{j=0}^{m-1}g(R^{2 \pi j /m}_1(z_1,z_2),R^{2 \pi j /m}_2(z_1,z_2), z_3, \ldots, z_n) \, ,
	\]
      where $m=\left\lceil\frac{K}{2}\right\rceil$.
       $\mathcal{I}(\sum_{j=1}^n z_j^2+1)^{\compl}$ is the open ball in $\R^n$ with radius $1$ centered 
      at the origin, and by \cite[Thm.\ 5.4]{TTT} the boundaries of the two convex components of 
      $\mathcal{I}(g)^{\compl}$ are given by 
      $B_1 := \{\mathbf{y} \in \R^n \, : \, y_1 \ge  1/r \text{ and } r^2 y_1^2 - \sum_{j=2}^n y_j^2  = 1\}$
      and
      $B_2 := \{\mathbf{y} \in \R^n \, : \, y_1 \le -1/r \text{ and } r^2 y_1^2 - \sum_{j=2}^n y_j^2  = 1\}$.
Since $B_1 \cup B_2$ is a two-sheeted $n$-dimensional hyperboloid, $B_1$
and $B_2$ are the boundaries of strictly convex sets.
Note that for $r \to \infty$, the set 
      $\mathcal{I}(g)$ converges to the $y_1$-hyperplane
      on all compact regions of $\R^n$.
        
      	Again, since the rotation $(R_1^{2 \pi j /m}(z_1,z_2),R_2^{2 \pi j / m}(z_1,z_2))$ in the arguments of $g$ induce a rotation of its 
      imaginary projection by an angle of $-2\pi j/m$ with respect to the $y_1y_2$-plane, 
      choosing $r$ large enough gives $2m \geq K$ bounded and strictly convex components.
\end{proof}

\section{Conclusion and open question}

We have provided quantitative and convex-geometric results on the complement
components of imaginary projections and of the hyperbolicity cones of
hyperbolic polynomials.
In the case of amoebas of polynomials, 
to every complement component an order can be associated
(see \cite{fpt-2000} for this order map).
In the homogeneous case of imaginary projections, the direction vectors of the 
hyperbolicity cones can be regarded as a (non unique) representative of an order 
map. And for the unbounded complement components
of non-homogeneous polynomials, Theorem~\ref{thm:unbounded-complement-components-dim-n}
establishes a connection via the initial form.
It is an open question, whether a variant or generalization
of this also holds for the bounded complement components in case of non-homogeneous
polynomials.

Moreover, Shamovich and Vinnikov \cite{shamovich-vinnikov-2014}
recently studied generalizations of hyperbolic polynomials in terms of
hyperbolic varieties, and it would be interesting to extend our results
to that setting.

\section*{Acknowledgments}

We thank Mario Kummer and Pedro Lauridsen Ribeiro for comments and corrections
on an earlier version.

\bibliographystyle{plain}

\end{document}